  \def\theta{theta}%
  \def\pi{pi}%
  \def\in{in }%
\newcommand{\mres}{\mathbin{\vrule height 1.6ex depth 0pt width 0.13ex\vrule height 0.13ex depth 0pt width 1.3ex}}
\declaretheorem[name=Theorem,numberwithin=section]{thm}
\declaretheorem[name=Remark,style=remark,sibling=thm]{rem}
\declaretheorem[name=Lemma,sibling=thm]{lemma}
\declaretheorem[name=Proposition,sibling=thm]{prop}
\declaretheorem[name=Definition,style=definition,sibling=thm]{defn}
\declaretheorem[name=Corollary,sibling=thm]{cor}
\numberwithin{equation}{section}
\newcommand{\wh}{\widehat}
\newcommand{\sub}{\subset}
\newcommand{\ov}{\overline}
\newcommand{\bbN}{\mathbb{N}}
\newcommand{\bbR}{\mathbb{R}}
\newcommand{\bbS}{\mathbb{S}}
\newcommand{\bbB}{\mathbb{B}}
\newcommand{\Rn}{\mathbb{R}^{n+1}}
\newcommand{\Sn}{\mathbb{S}^n}
\newcommand{\op}[1]{\operatorname{#1}}
\newcommand{\al}{\alpha}
\newcommand{\de}{\delta}
\newcommand{\la}{\lambda}
\newcommand{\si}{\sigma}
\newcommand{\Si}{\Sigma}
\newcommand{\ze}{\zeta}
\newcommand{\Om}{\Omega}
\newcommand{\De}{\Delta}
\newcommand{\cB}{\mathcal{B}}
\newcommand{\cC}{\mathcal{C}}
\newcommand{\cF}{\mathcal{F}}
\newcommand{\cH}{\mathcal{H}}
\newcommand{\cK}{\mathcal{K}}
\newcommand{\cL}{\mathcal{L}}
\newcommand{\cM}{\mathcal{M}}
\newcommand{\cO}{\mathcal{O}}
\newcommand{\cR}{\mathcal{R}}
\DeclareMathOperator{\supp}{supp}
\DeclareMathOperator{\dist}{dist}
\DeclareMathOperator{\vol}{vol}
\newcommand{\n}{\nabla}
\newcommand{\fa}{\forall}
\newcommand{\ip}[2]{\langle #1,#2 \rangle}
\newcommand{\abs}[1]{\lvert #1\rvert}
\newcommand{\eq}[1]{\begin{equation}\begin{alignedat}{2}#1\end{alignedat}\end{equation}}
\newcommand{\Rplus}{\bbR^{n+1}_+}
\newcommand{\Cth}{\cC_\theta}
\newcommand{\bn}{\nabla}
\newcommand{\bHess}{\nabla^2}
\newcommand{\Sd}{\mathbb{S}^{d-1}}
\newcommand{\Hd}{\mathcal{H}}
\newcommand{\R}{\mathbb{R}}
\begin{document}

\title{Capillary $L_p$-Christoffel-Minkowski problem}
	\author[Y. Hu, M. N. Ivaki]{Yingxiang Hu, Mohammad N. Ivaki}

\begin{abstract}
We solve the capillary $L_p$-Christoffel--Minkowski problem in the half-space for $1<p<k+1$ in the class of even hypersurfaces. A crucial ingredient is a non-collapsing estimate that yields lower bounds for both the height and the capillary support function. Our result extends the capillary Christoffel--Minkowski existence result of \cite{HIS25}.
\end{abstract}

\maketitle

\section{Introduction}
The problem of prescribing area measures of convex hypersurfaces originates in the classical works of Christoffel \cite{Chr65}, Minkowski \cite{Min97,Min03}, Aleksandrov \cite{Ale56}, Nirenberg \cite{Nir57} and Pogorelov \cite{Pog52,Pog71}, which established the modern interplay between convex geometry and fully nonlinear elliptic equations. In the smooth setting, the Christoffel--Minkowski problem seeks a smooth, strictly convex hypersurface whose $k$-th elementary symmetric function of the principal radii of curvature agrees with a given function on the sphere. This direction was further developed in the works of Firey and Berg \cite{Fir67,Fir70,Ber69}.

Over the past decades, the Christoffel--Minkowski problem has seen substantial progress. For the top-order case $k=n$, corresponding to the classical Minkowski problem, the situation is by now well understood: the seminal works of Cheng--Yau \cite{CY76} and Caffarelli \cite{Caf90a,Caf90b} provide an existence and regularity theory for the underlying fully nonlinear equation. For intermediate orders $1< k<n$, the picture is less complete, although \cite{GM03,STW04} provide a far-reaching existence result for the Christoffel--Minkowski problem in the smooth setting. See also \cite{BHOM25,MU25} for the recent break-through in the rotationally symmetric case. 

The $L_p$-extension of the Christoffel--Minkowski problem, introduced by Lutwak \cite{Lut93} in the framework of the Brunn--Minkowski--Firey theory, replaces the classical area measures by their $L_p$ analogues and leads to the curvature equation
\eq{
  \si_k(\tau^\sharp[h]) = h^{p-1}\phi \quad \text{on }\bbS^n,
}
for the support function $h$ of a smooth, strictly convex body/hypersurface, where $\tau^\sharp[h]=g^{-1}\cdot(\nabla^2 h+h g)$ and $g$ denotes the standard metric on $\bbS^n$. The $L_p$-Minkowski problem has since been the subject of intensive study and has developed into a mature theory over a broad range of $p$; see \cite{LO95,CW06,BLYZ13,HLYZ16,BBCY19,HXZ21,GLW22,LXYZ24} and also \cite{CW00,BIS19,LWW20,CL21,BIS21b,BG23}. In contrast, for $k<n$ the situation is more fragmentary as  the intermediate $L_p$-area measures remain, in general, much less understood. In the smooth case, however, and in particular for $p>1$ with even data on $\bbS^n$, one now has a well-developed set of results: existence, uniqueness and regularity of solutions, together with constant rank theorems ensuring strict convexity; see, for instance, \cite{GM03,HMS04,GLM06,GMZ06,GX18,Iva19,BIS23,BIS23b,HI24,Zha24,CH25} and \cite{BIS21,HLX24,LW24}.

A natural question is how this picture changes in the presence of a boundary. In the capillary setting, one considers hypersurfaces in the half-space that meet a fixed supporting hyperplane at a prescribed contact angle $\theta\in(0,\pi/2)$. For the top-order case $k=n$, capillary versions of the $L_p$-Minkowski problem have been developed in a series of recent works. For $p\ge 1$, Mei, Wang and Weng solved the capillary $L_p$-Minkowski problem via the continuity method in \cite{MWW25a,MWW25c}. For $-(n+1)<p<1$, even solutions were constructed in \cite{HI25} by means of an iterative scheme based on the curvature image operator, and a unified curvature flow approach was later introduced in \cite{HHI25}, treating the even capillary $L_p$-Minkowski problem for all $p>-(n+1)$.

For $ k<n$, the capillary analogue of the Christoffel--Minkowski problem prescribes $\si_k(\tau^\sharp[s])$ on the capillary spherical cap $\Cth$ and couples the interior equation with a Robin boundary condition encoding the contact angle. This capillary analogue was solved in \cite{HIS25}, where the existence of smooth, strictly convex, $\theta$-capillary hypersurfaces was established under conditions on the prescribed function that are tailored to the applicability of a constant rank theorem. The existence of a solution to the capillary Christoffel-Minkowski problem was also established in \cite{MWW25c}, subject to an additional assumption concerning the existence of a suitable homotopy path.

The aim of this paper is to extend the work \cite{HIS25} to the $L_p$-framework in the range $1<p<k+1$. In analogy with the closed case \cite{GM03,GX18}, we study the prescribed curvature equation
\eq{
  \si_k(\tau^\sharp[s]) = s^{p-1}\phi
  \quad\text{in }\Cth
}
for an even, positive, smooth function $\phi$ on $\Cth$, together with the capillary boundary condition
\eq{
  \n_\mu s = \cot\theta\,s
  \quad\text{on }\partial\Cth.
}

\begin{thm}\label{thm:main}
Let $1 < p < k+1$, $\theta \in (0, \pi/2)$, and $\phi \in C^\infty(\mathcal{C}_\theta)$ be a positive function satisfying
\eq{
\phi(-\zeta_1,\ldots,-\zeta_n,\zeta_{n+1})=\phi(\zeta_1,\ldots,\zeta_n,\zeta_{n+1})\quad \forall \zeta\in \Cth,
}
\eq{\label{spherically-convex}
\nabla^2\phi^{-\frac{1}{p+k-1}} + g\phi^{-\frac{1}{p+k-1}} \ge 0 \quad \text{in }\Cth
}
and the boundary condition
\eq{ \label{bdry-condition}
\nabla_\mu\phi^{-\frac{1}{p+k-1}} \le \cot\theta \,\phi^{-\frac{1}{p+k-1}} \quad \text{on } \partial\mathcal{C}_\theta.
}
Then there exists a unique even, strictly convex, capillary hypersurface $\Sigma \subset \overline{\mathbb{R}^{n+1}_+}$ with contact angle $\theta$ whose capillary support function $s$ solves
\eq{\label{capillary-Lp-eq}
\left\{
\begin{aligned}
  \sigma_k(\tau^\sharp[s])
    &= s^{p-1} \phi
    &&\text{in }\Cth,\\
  \nabla_\mu s &= \cot\theta\,s
    &&\text{on }\partial\Cth.
\end{aligned}
\right.
}
\end{thm}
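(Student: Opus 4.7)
The plan is to prove Theorem \ref{thm:main} by the method of continuity, paralleling the $p=1$ case in \cite{HIS25} and the closed $L_p$-Christoffel--Minkowski arguments of \cite{GM03,GX18}. I would deform the right-hand side through a family $\{\phi_t\}_{t \in [0,1]}$ of positive even smooth functions on $\Cth$, each satisfying (\ref{spherically-convex}) and (\ref{bdry-condition}), joining a constant at $t=0$ (whose solution is an explicit capillary spherical cap) to the given $\phi$ at $t=1$. Let $I \subset [0,1]$ denote the set of parameters for which
\eq{
\sigma_k(\tau^\sharp[s]) = s^{p-1}\phi_t \text{ in } \Cth, \qquad \nabla_\mu s = \cot\theta\,s \text{ on } \partial\Cth
}
admits an even strictly convex solution. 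Openness of $I$ follows from the implicit function theorem: linearizing the equation at $s_t$ yields
\eq{
L\eta = F^{ij}(\nabla^2\eta + \eta g)_{ij} - (p-1)s_t^{p-2}\phi_t\,\eta
}
with the Robin boundary condition $\nabla_\mu \eta = \cot\theta\,\eta$. An Euler-type identity gives $L[s_t] = (k-p+1)s_t^{p-1}\phi_t>0$ since $p<k+1$, and examining the ratio $\eta/s_t$ at its extrema (placed in the interior by the Robin compatibility via the Hopf lemma) rules out any non-trivial kernel on even functions. Fredholm theory for oblique problems then supplies the inverse.

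The closedness of $I$ reduces to uniform a priori estimates. The $C^0$ upper bound is immediate: at an interior maximum of $s$ --- again forced by the Robin condition --- $\nabla^2 s \le 0$, so $\sigma_k(\tau^\sharp[s]) \le \binom{n}{k}s^k$, and the equation combined with $p<k+1$ yields $s\le C$. The crux of the argument --- and the main obstacle --- is the \emph{lower} bound: both a uniform positive lower bound on $s$ and a lower bound on the height of the underlying hypersurface above the supporting hyperplane. A capillary convex body can degenerate either by collapsing onto the hyperplane or by pinching in a horizontal direction, and both modes must be excluded. I would derive the non-collapsing estimate by introducing an auxiliary test function coupling $s$ with $\phi^{-1/(p+k-1)}$ and applying the maximum principle in the Robin setting, with (\ref{spherically-convex}) controlling the interior contribution and (\ref{bdry-condition}) the boundary term; the evenness hypothesis places the origin inside the body, tying the lower bound on $s$ to an inradius estimate.

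With $s$ pinched between positive constants, the capillary constant rank theorem from \cite{HIS25}, whose structural hypotheses coincide with (\ref{spherically-convex}) and (\ref{bdry-condition}), propagates strict convexity along the path and provides a uniform positive lower bound on the eigenvalues of $\tau^\sharp[s]$. The upper bound on the principal radii follows from a Pogorelov-type calculation for $\sigma_k$-equations under oblique boundary conditions, and Evans--Krylov--Lieberman regularity followed by a Schauder bootstrap upgrades this to uniform higher-order bounds. Thus $I=[0,1]$, producing the solution at $t=1$. Uniqueness among even solutions is then the standard $p>1$ comparison: the ratio $s_1/s_2$ attains its extrema in the interior by the Robin condition, and the linearization identity above forces $s_1 \equiv s_2$.

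The principal difficulty is genuinely the non-collapsing step. The upper $C^0$ bound, the openness argument, the constant rank theorem, and the higher regularity are all standard or already present in \cite{HIS25}, so the novelty required for the $L_p$ range $1<p<k+1$ is a quantitative lower bound on the capillary support function that is stable in $p$ and compatible with the capillary boundary condition --- precisely the non-collapsing estimate highlighted in the abstract.
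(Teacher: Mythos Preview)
Your overall architecture---continuity method, linearization with trivial even kernel, constant rank theorem, Pogorelov-type curvature bound, Schauder bootstrap---is reasonable and is essentially the alternative route the paper itself sketches in its final remark. However, two of your ``routine'' steps are incorrect as stated, and your proposed non-collapsing mechanism differs fundamentally from what the paper actually does.

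\textbf{The $C^0$ upper bound is not immediate.} Your argument contains two errors. First, the Robin condition $\nabla_\mu s=\cot\theta\,s>0$ does \emph{not} force the maximum of $s$ into the interior; a positive outward derivative is perfectly compatible with a boundary maximum. Second, even granting an interior maximum, your inequality goes the wrong way: from $s^{p-1}\phi=\sigma_k(\tau^\sharp[s])\le\binom{n}{k}s^k$ you obtain $s^{k+1-p}\ge\phi/\binom{n}{k}$, which for $p<k+1$ is a \emph{lower} bound on $s$ at its maximum, not an upper bound. (Working instead with $s/\ell$, whose normal derivative vanishes on $\partial\Cth$, one obtains precisely Lemma~\ref{thm:LowerMax}---again a lower bound on $\max s$.) The paper's upper bound (Lemma~\ref{lem:upper-s-bound}) is an integral estimate: integrate the equation against $\ell$, use the divergence structure of $\sigma_k$ and Newton--Maclaurin to compare $\int s\,\sigma_{k-1}$ with $\int \ell\,\sigma_k$, and close via H\"older. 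No pointwise maximum principle suffices here.

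\textbf{The non-collapsing estimate.} You correctly flag this as the crux, but your proposed ``auxiliary test function coupling $s$ with $\phi^{-1/(p+k-1)}$'' is not how the paper proceeds, and it is not clear such a barrier exists. The paper argues by contradiction and compactness: if the heights $H_i\to 0$, the reflected bodies $K_i$ converge to a nontrivial origin-symmetric $K_\infty\subset e_{n+1}^\perp$ of some dimension $m\le n$. The equation gives uniform two-sided control of $\int_{\mathcal B}h_i^{1-q_i}\,dS_k(K_i,\cdot)$ over suitably chosen ``belts'' $\mathcal B\Subset\op{int}(\bbS^n_\theta)$ bounded away from $\bbS^n\cap L^\perp$, where $L=\op{lin}(K_\infty)$. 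On the other hand, the structure of $S_k(K_\infty,\cdot)$ for a body lying in an $m$-plane (via the lifting formula of Goodey--Kiderlen--Weil, Theorem~\ref{thm:GKW}, and its corollaries) forces these belt integrals to vanish or to violate the bound as the belt parameters degenerate. This measure-theoretic argument is the genuine novelty; a pointwise barrier of the type you describe is not supplied and would require a separate idea.

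\textbf{Uniqueness.} Your proposed ratio argument is more delicate than you suggest: the zeroth-order coefficient of the linearized operator has no sign, so the strong maximum principle does not apply directly, and the inequality one extracts at an interior extremum of $s_1/s_2$ is $M^{k+1-p}\ge 1$, which is vacuous. The paper instead combines the capillary Aleksandrov--Fenchel inequality for mixed volumes with H\"older's inequality to force $\int s_1^p\phi=\int s_2^p\phi$ and then invokes the equality case.
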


The paper is organized as follows. In Section \ref{sec:preliminaries} we recall the basic capillary geometry in the half-space and fix notation. Section \ref{sec:non-collapsing} is devoted to non-collapsing estimates; i.e. a lower bound for the height of the hypersurface, both in the rotationally symmetric and in the general even case. In Section \ref{sec:regularity} we derive curvature and regularity estimates for solutions of \eqref{capillary-Lp-eq}. In Section \ref{sec:strict-convexity} we prove a capillary constant rank theorem for our equation. Finally, in Section \ref{sec:existence-uniqueness} we complete the proof of \autoref{thm:main} by establishing existence and uniqueness.

\section{Preliminaries}\label{sec:preliminaries}
Let $\{e_i\}_{i=1}^{n+1}$ be the standard orthonormal basis of $\bbR^{n+1}$. Let 
\eq{
\mathbb{R}^{n+1}_+ = \{x \in \mathbb{R}^{n+1}: x_{n+1} > 0\}
}
be the upper half-space with boundary $\partial\mathbb{R}^{n+1}_+ = \{x_{n+1} = 0\}$. The unit ball of $\bbR^{n+1}$ is denoted by $\bbB$, and we write $\bbS^n$ for the unit ball.

\begin{enumerate}[(1)]
\item \textbf{Support functions of convex bodies.}
For a bounded convex set $K\subset\bbR^{n+1}$, the support function
$h_K:\Sn\to\bbR$ is defined as
\eq{
  h_K(u):=\sup\{\langle x,u\rangle:x\in K\},\quad u\in\Sn.
}
When no confusion can arise, we simply write $h:=h_K$.

\item \textbf{Area measures in $\bbR^{n+1}$.}
Let $K\subset\bbR^{n+1}$ a bounded convex set and $k\in\{0,\dots,n\}$. The $k$-th area measure $S_k(K,\cdot)$ is the finite Borel measure on $\Sn$ appearing in the classical local Steiner formula. If $K$ is smooth and strictly convex with principal radii of curvature $\la_1,\dots,\la_n$ at a point with outer unit normal $u\in\Sn$, then
\eq{
  dS_k(K,u)
  = \frac{1}{\binom{n}{k}}
    \sigma_k(\la_1,\dots,\la_n)\,d\si(u),
}
where $d\si$ denotes the spherical Lebesgue measure on $\Sn$.
If $L\subset\bbR^{n+1}$ is an $m$-dimensional linear subspace and
$K\subset L$, we write $S_k^L(K,\cdot)$ for the $k$-th area measure of
$K$ viewed as a convex set in $L$.

\item \textbf{Hausdorff measure and subspheres.}
For any integer $d\ge 1$, we write $\Hd^d$ for the $d$-dimensional Hausdorff measure, and
\eq{
  \bbS^d := \{x\in\R^{d+1} : |x|=1\}
}
for the unit sphere in $\R^{d+1}$. For a linear subspace
$L\subset\R^{n+1}$ of dimension $m$, we identify $\Sn\cap L$ with the unit sphere in $L$.
We also write
\eq{
  \mathbb{S}^n_\theta := \{x\in\Sn : \langle x,e_{n+1}\rangle \ge \cos\theta\}, \quad \cC_{\theta}:=\bbS^n_{\theta}-\cos\theta\, e_{n+1}.
}

Integrals of the form
\eq{
  \int_{\Sn} f,\quad
  \int_{\mathbb{S}^n_\theta} f,\quad
  \int_{\Cth} f,\quad
  \int_{\Sn\cap L} f,\quad
  \int_{\bbS^d} f,
}
are always understood with respect to the restriction of the appropriate Hausdorff measure
(thus, $\Hd^n$ on $\Sn$, $\mathbb{S}^n_\theta$ and $\Cth$, $\Hd^{m-1}$ on $\Sn\cap L$, and $\Hd^{d}$ on $\bbS^d$).
We also write
\eq{
  \omega_d := \Hd^{d}(\mathbb{S}^d),
}
so that $\omega_d$ is the surface area of $\mathbb{S}^d$.
\end{enumerate}

\begin{defn}
A smooth, compact, connected, orientable hypersurface $\Sigma \subset \overline{\mathbb{R}^{n+1}_+}$ with $\mathrm{int}(\Sigma) \subset \mathbb{R}^{n+1}_+$ and $\partial\Sigma \subset \partial\mathbb{R}^{n+1}_+$ is called a capillary hypersurface with contact angle $\theta \in (0,\pi)$ if
\eq{
\langle \nu, e_{n+1} \rangle = \cos\theta \quad \text{on } \partial\Sigma,
}
where $\nu$ is the outer unit normal of $\Sigma$.
\end{defn}

The model capillary surface is
\eq{
\mathcal{C}_\theta
 = \{\zeta \in \overline{\mathbb{R}^{n+1}_+} : |\zeta + \cos\theta\, e_{n+1}| = 1\}.
}
Via the translation
\eq{
T(\zeta) := \zeta + \cos\theta\,e_{n+1},
}
we may identify \(\mathcal{C}_\theta\) with \(\mathbb{S}^n_\theta\).

We also define
\eq{
\mathcal{C}_{\theta, r} := \left\{ \zeta \in \overline{\mathbb{R}_+^{n+1}} \mid |\zeta + r \cos \theta\, e_{n+1}| = r \right\}.
}
Note that the radius of $\partial \mathcal{C}_{\theta, r}$ is $r\sin\theta$.

We call $\Si$ strictly convex if the enclosed region $\widehat\Si$ is a convex body (i.e. compact, convex, with non-empty interior) and the second fundamental form of $\Si$ is positive definite. For a strictly convex capillary hypersurface $\Sigma$, the capillary Gauss map is defined as
\eq{
\tilde{\nu} = \nu - \cos\theta\, e_{n+1}: \Sigma \to\Cth.
}
This is a diffeomorphism onto the capillary spherical cap, see \cite[Lem. 2.2]{MWWX25}.

\begin{defn}
Let $\Si$ be a strictly convex, capillary hypersurface. The capillary support function $s:\Cth \to \mathbb{R}$ of $\Sigma$ is defined by
\eq{
s(\zeta) = \langle \tilde{\nu}^{-1}(\zeta), \zeta + \cos\theta\, e_{n+1} \rangle.
}
\end{defn}

For the model cap $\mathcal{C}_\theta$, the capillary support function is
\eq{
\ell(\zeta) = \sin^2\theta - \cos\theta\langle \zeta, e_{n+1} \rangle.
}

On $\Cth$ we also write $g$ for the round metric, $\nabla$ for its
Levi-Civita connection and $\nabla^2$ for the covariant Hessian.
For a function $f\in C^2(\Cth)$ we set
\eq{
  \tau[f]:=\nabla^2 f+f\,g,\quad
  \tau^\sharp[f]:=g^{-1}\cdot\tau[f],
}
so that $\tau^\sharp[f]$ is a symmetric endomorphism of $T\Cth$. Its
eigenvalues are denoted by $\la_1,\dots,\la_n$, and
$\sigma_k(\tau^\sharp[f])$ means $\sigma_k(\la_1,\dots,\la_n)$.
We also write $\nabla_\mu f$ for the covariant derivative in the
direction of the outward unit conormal $\mu$ along $\partial\Cth$.

For a symmetric matrix $A=(a_{ij})$ with eigenvalues
$\la_1,\dots,\la_n$ we write
\eq{
  \sigma_k^{ij}(A):=\frac{\partial\sigma_k}{\partial a_{ij}}(A),
}
and for $F=\sigma_k^{1/k}$ we set
\eq{
  F^{ij}(A):=\frac{\partial F}{\partial a_{ij}}(A)
           =\frac1k\,\sigma_k(A)^{\frac{1}{k}-1}\sigma_k^{ij}(A).
}
When $A=\tau^\sharp[s]$ for some function $s$ on $\Cth$, we abbreviate
$\sigma_k^{ij}(\tau^\sharp[s])$ and $F^{ij}(\tau^\sharp[s])$ by
$\sigma_k^{ij}$ and $F^{ij}$, respectively.

Writing points of $\Rn$ as $x=(x_1,\dots,x_n,x_{n+1})$, let $\cR$ denote the reflection
\eq{
  \cR(x_1,\dots,x_n,x_{n+1})
  := (-x_1,\dots,-x_n,x_{n+1}).
}
A function $\varphi:\Cth\to\bbR$ is called even if $\varphi\circ\cR=\varphi$, and we say that a capillary hypersurface $\Si$ (or its capillary support function $s$) is even if
\eq{
  x\in\Si
  \implies
   \cR(x)\in\Si.
}

\begin{defn}\label{def:mixed-volume}
Let $k\in\{0,\dots,n\}$. Let $s_0,\dots,s_{n}\in C^\infty(\Cth)$ be capillary
support functions. Denote by $Q_k$ the linear polarization of
$\sigma_k$ on symmetric endomorphisms of $T\Cth$, i.e. the unique symmetric
multilinear map such that
\eq{
  Q_k(A,\dots,A)=\frac{\sigma_k(A)}{\binom{n}{k}}
  \quad\text{for every symmetric endomorphism }A.
}
The capillary mixed volume of $s_0,\dots,s_{k}$ is defined by
\eq{\label{def:Vk}
  V(s_0,\dots,s_{k},\underbrace{\ell,\ldots,\ell}_{(n-k)-\text{times}})
  := \frac{1}{n+1}
     \int_{\Cth}
       s_0
       Q_k\bigl(
         \tau^\sharp[s_1],\dots,\tau^\sharp[s_{k}],
         \underbrace{\tau^\sharp[\ell],\dots,\tau^\sharp[\ell]}_{(n-k)-\text{times}}
       \bigr).
}
In particular, one has
\eq{\label{eq:Vk-special}
  V(s_0,\underbrace{s,\dots,s}_{k- \text{times}},\ell,\dots,\ell)
  = \frac{1}{n+1}\int_{\Cth} s_0\frac{\sigma_k\bigl(\tau^\sharp[s]\bigr)}{\binom{n}{k}}.
}
\end{defn}

\begin{thm}\label{lem:tilde-nu-parallel body}
Let $\Sigma\subset\ov{\bbR^{n+1}_+}$ be a strictly convex, $\theta$-capillary hypersurface.
For $t>0$ define
\eq{ \phi_t:\Sigma\to\ov{\bbR^{n+1}_+},\quad \phi_t(x):=x+t \tilde\nu(x).
}
Then $\Sigma_t:=\phi_t(\Sigma)$ is a strictly convex, $\theta$-capillary hypersurface. Moreover, the (standard) outer parallel convex body
\eq{
  K_t:=\bigl(\widehat{\Sigma}-t\cos\theta\,e_{n+1}\bigr)+t\,\bbB,
}
and the capillary outer parallel convex body are related via $\wh{\Si_t}=K_t\cap \ov{\bbR^{n+1}_+}$.
In addition, we have $\Si_t=\Si+t\Cth$.
\end{thm}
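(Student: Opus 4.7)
The plan is to reduce the statement to the classical outer parallel body construction via the rewriting
\[
\phi_t(x) = (x - t\cos\theta\,e_{n+1}) + t\,\nu(x),
\]
which exhibits $\phi_t$ as a downward vertical translation of $\widehat{\Sigma}$ by $t\cos\theta\,e_{n+1}$ followed by Minkowski summation with $t\bbB$. Once this identification is in place, all three conclusions reduce to standard facts about outer parallel bodies of smooth, strictly convex sets, combined with a bookkeeping argument identifying the portion of $\partial K_t$ that survives in $\overline{\mathbb{R}^{n+1}_+}$.

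First I would verify $\phi_t(\Sigma) \subset \overline{\mathbb{R}^{n+1}_+}$ with $\phi_t(\partial\Sigma) \subset \partial\mathbb{R}^{n+1}_+$. Strict convexity of $\Sigma$ together with the capillary boundary condition imply that $\nu$ is a diffeomorphism of $\Sigma$ onto $\mathbb{S}^n_\theta$ sending $\partial\Sigma$ onto $\partial\mathbb{S}^n_\theta$, so $\langle\nu(x),e_{n+1}\rangle \geq \cos\theta$ with equality precisely on $\partial\Sigma$. Then
\[
\phi_t(x)_{n+1} = x_{n+1} + t\bigl(\langle\nu(x),e_{n+1}\rangle - \cos\theta\bigr) \geq 0,
\]
and vanishes exactly on $\partial\Sigma$.

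Next, set $\widetilde{\Sigma} := \widehat{\Sigma} - t\cos\theta\,e_{n+1}$, so $K_t = \widetilde{\Sigma} + t\bbB$ is a smooth, strictly convex body whose boundary is parametrized by $u \mapsto y(u) + tu$, where $y(u)\in\partial\widetilde{\Sigma}$ is a support point in direction $u$. For $u \in \mathbb{S}^n_\theta$, $y(u) = x - t\cos\theta\,e_{n+1}$ with $\nu(x)=u$, hence $y(u)+tu = \phi_t(x)$. For $u$ in the complementary cap $\{u_{n+1}<\cos\theta\}$, $y(u)$ lies either on the flat bottom of $\widetilde{\Sigma}$ (at height $-t\cos\theta$) or in the normal cone of a corner point on $\partial\Sigma - t\cos\theta\,e_{n+1}$, and in both cases a direct computation gives $(y(u)+tu)_{n+1} = -t\cos\theta + tu_{n+1} \leq 0$, with equality only for $u \in \partial\mathbb{S}^n_\theta$. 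Hence $\partial K_t \cap \overline{\mathbb{R}^{n+1}_+} = \phi_t(\Sigma)$ and the remainder of $\partial K_t$ is truncated by the half-space into a flat face on $\{x_{n+1}=0\}$, giving $\widehat{\Sigma_t} = K_t \cap \overline{\mathbb{R}^{n+1}_+}$.

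Strict convexity and the contact angle of $\Sigma_t$ then follow at once from the smoothness and strict convexity of $K_t$ and the Gauss-map identity $\nu_{\Sigma_t}(\phi_t(x)) = \nu(x)$ for outer parallel bodies. For the Minkowski identity $\Sigma_t = \Sigma + t\Cth$, since $\Cth$ is the unit sphere centered at $-\cos\theta\,e_{n+1}$ whose inverse Gauss map at $u \in \mathbb{S}^n_\theta$ is $u - \cos\theta\,e_{n+1}$, the $\theta$-capillary support-sum parametrized by a common outer normal $u = \nu(x)$ yields $x + t(u - \cos\theta\,e_{n+1}) = \phi_t(x)$; equivalently one checks that $s_{\Sigma_t} = s + t\ell$ from $h_{K_t}(\zeta+\cos\theta\,e_{n+1}) = h_{\widehat{\Sigma}}(\zeta+\cos\theta\,e_{n+1}) + t\,\ell(\zeta)$. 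The main technical obstacle is the half-space bookkeeping in the third step: pinning down that only $u \in \mathbb{S}^n_\theta$ contributes to $\partial K_t \cap \overline{\mathbb{R}^{n+1}_+}$ and ruling out spurious corner or lower-hemisphere contributions. Everything else follows from a direct unpacking of the standard outer-parallel-body formalism.
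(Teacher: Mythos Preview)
Your approach is essentially the same as the paper's: both hinge on the rewriting $\phi_t(x)=(x-t\cos\theta\,e_{n+1})+t\nu(x)$ and then identify $\Sigma_t$ with the portion of $\partial K_t$ lying in $\overline{\mathbb{R}^{n+1}_+}$, followed by the support-function computation $s_{\Sigma_t}=s+t\ell$. The paper carries out the half-space bookkeeping (your ``main technical obstacle'') via a direct two-inclusion argument on $\partial L_t\setminus P$ rather than the Gauss-map case split you outline, but the content is the same.

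One slip to fix: $K_t$ is \emph{not} smooth and strictly convex. The body $\widehat{\Sigma}-t\cos\theta\,e_{n+1}$ has a flat bottom face and a corner along its edge, so $K_t$ is only $C^{1,1}$ and retains a flat face at height $-t\cos\theta-t$. You implicitly know this (you invoke the flat bottom and corner normal cones two lines later), but you then write ``strict convexity and the contact angle of $\Sigma_t$ follow at once from the smoothness and strict convexity of $K_t$,'' which is circular. The clean fix is the paper's Step~2 computation: in a principal frame, $d\phi_t(e_i)=(1+t\kappa_i)e_i$, so $\phi_t$ is an immersion with unchanged normal and principal curvatures $\kappa_i/(1+t\kappa_i)>0$; injectivity follows from convexity of $\widehat{\Sigma}$. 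Equivalently, restrict your parallel-body argument to the smooth, strictly convex piece $\Sigma-t\cos\theta\,e_{n+1}\subset\partial\widetilde{\Sigma}$, which is what actually produces $\Sigma_t$.
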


\begin{proof}
Let $P=\{x_{n+1}=0\}$. Define $f(x)=\langle x,e_{n+1}\rangle=x_{n+1}$ and
\eq{
  g(x)=\langle \tilde\nu(x),e_{n+1}\rangle
       =\langle \nu(x),e_{n+1}\rangle-\cos\theta.
}
Then
\eq{\label{eq:height-after-shift}
  \langle \phi_t(x),e_{n+1}\rangle = f(x)+t\,g(x).
}   

\emph{Step 1.}
On $\partial\Sigma$ we have $f=0$ (since $\partial\Sigma\subset P$) and $g=0$ (by capillarity),
hence $(f+tg)(x)=0$ for $x\in\partial\Sigma$, i.e. $\phi_t(\partial\Sigma)\subset P$. Moreover, since $\nu(\op{int}(\Si))\subset \op{int}(\bbS^n_{\theta})$, $f+t g>0$ on $\op{int}(\Sigma)$ for any $t>0$. 

\emph{Step 2.}
Let $x\in\Sigma$ and choose an orthonormal basis $\{e_1,\dots,e_n\}$ of $T_x\Sigma$
consisting of principal directions, so that
\eq{
  {}^{\Si}\nabla_{e_i}\nu=\kappa_i e_i,\quad i=1,\dots,n,
}
with principal curvatures $\kappa_i$.
We have ${}^{\Si}\nabla_{e_i}\tilde\nu={}^{\Si}\nabla_{e_i}\nu$, and therefore
\eq{
  d\phi_t(e_i)=e_i+t\,{}^{\Si}\nabla_{e_i}\tilde\nu
  =e_i+t\,{}^{\Si}\nabla_{e_i}\nu
  =(1+t\kappa_i)e_i.
}
Thus $d\phi_t(T_x\Sigma)$ is spanned by $\{e_1,\dots,e_n\}$, $\phi_t$ is a smooth immersion, and the oriented unit normal of $\Sigma_t=\phi_t(\Sigma)$
at $y=\phi_t(x)$ equals $\nu(x)$, i.e.
\eq{
  \nu_t(y)=\nu(x)\quad\text{for }y=\phi_t(x).
}
Next we show that $\phi_t$ is an injective immersion and thus an embedding. Assume $\phi_t(x)=\phi_t(x')$ for some
$x,x'\in\Sigma$. Then
\eq{
  x+t(\nu(x)-\cos\theta\,e_{n+1})=x'+t(\nu(x')-\cos\theta\,e_{n+1}),
}
hence
\eq{\label{eq:inj-start}
  x-x'=t\bigl(\nu(x')-\nu(x)\bigr).
}
Taking the inner product with $\nu(x)$ gives
\eq{\label{eq:inj-ip}
  \ip{x-x'}{\nu(x)}=t\bigl(\ip{\nu(x')}{\nu(x)}-1\bigr).
}
Since $\wh{\Si}$ is convex and $\nu(x)$ is the outer normal vector to $\Si$ at $x$,
\eq{
  \ip{x'-x}{\nu(x)}\le 0.
}
On the other hand, $\ip{\nu(x')}{\nu(x)}\le 1$, hence by \eqref{eq:inj-ip}
\eq{
  \ip{x-x'}{\nu(x)}=0
  \quad\text{and}\quad
  \ip{\nu(x')}{\nu(x)}=1.
}
Thus $\nu(x')=\nu(x)$. Since $\Sigma$ is strictly convex, the Gauss map
$\nu:\Sigma\to\bbS_{\theta}^n$ is injective, hence $x'=x$.

\emph{Step 3.}
If $y=\phi_t(x)$ with $x\in\partial\Sigma$, then by step 1 and step 2 we have $y\in P$ and
$\nu_t(y)=\nu(x)$. Therefore,
\eq{
  \langle \nu_t(y),e_{n+1}\rangle
  = \langle \nu(x),e_{n+1}\rangle
  = \cos\theta,
}
so $\Sigma_t$ meets $P$ with the same contact angle $\theta$.

\emph{Step 4.}
Let $x\in\Sigma$ and set
\eq{
y:=\phi_t(x)=x+t(\nu(x)-\cos\theta\,e_{n+1}).
}
We claim that $y\in\partial K_t$ and that $\nu(x)$ is an outer normal of $K_t$ at $y$.

Note that $y\in K_t$. Since $\nu(x)$ is an outer unit normal of the convex body $\widehat{\Sigma}$ at $x$, we have
\eq{\label{eq:support-K}
  \ip{z-x}{\nu(x)}\le 0\quad\forall\,z\in \widehat{\Sigma}.
}
Let $w\in K_t$. Then for some $z\in \widehat{\Sigma}$ and $b\in\bbB$:
\eq{
  w=(z-t\cos\theta\,e_{n+1})+tb.
}
Moreover, we have
\eq{
  \ip{w-y}{\nu(x)} = \ip{z-x}{\nu(x)} + t\ip{b}{\nu(x)} - t.
}
Using \eqref{eq:support-K}, we obtain
\eq{
  \ip{w-y}{\nu(x)}\le 0\quad\forall\,w\in K_t.
}
Hence we must have $y\in\partial K_t$ and $\nu(x)$ is an outer normal of $K_t$ at $y$.

\emph{Step 5.} Let $L_t=K_t\cap\ov{\bbR^{n+1}_+}$.
We prove
\eq{
  \Sigma_t=\phi_t(\Sigma)\subset\partial L_t.
}

If  $x\in\op{int}(\Sigma)$, then by step 1, we have
\eq{
  y=\phi_t(x)\in\bbR^{n+1}_+.
}
Together with $y\in\partial K_t$ (by step 4) this implies $y\in\partial L_t\setminus P$.

If  $x\in\partial\Sigma$, then by step 1, $\phi_t(\partial\Sigma)\subset P$, so $y\in P$.
Let $x_j\in\op{int}(\Sigma)$ be any sequence with $x_j\to x$. Set $y_j:=\phi_t(x_j)$. By continuity, $y_j\to y$.
Since $y_j\in \partial L_t$ the limit point $y$ belongs to $\partial L_t\cap P$.

\emph{Step 6.}
We prove $ \ov{\partial L_t\setminus P}= \Sigma_t$. By steps 1, 2 and 5,
\eq{
  \op{int}(\Si_t)=\phi_t(\op{int}(\Si))\subset\partial L_t\setminus P\implies \Sigma_t \subset \ov{\partial L_t\setminus P}.
}
It remains to prove $\partial L_t\setminus P \subset \Sigma_t\setminus P$.

Let $y\in\partial L_t\setminus P$. Then $y\in\partial K_t$. Suppose $u\in\bbS^n$ is an outer unit normal to $K_t$ at $y$, i.e.
\eq{\label{eq:supporting-at-y}
  \ip{w-y}{u}\le 0\quad\forall\,w\in K_t.
}
We may write
\eq{\label{eq:y-decomposition}
  y=(x-t\cos\theta\,e_{n+1})+tb,
  \quad x\in \widehat{\Sigma},\quad b\in\bbB.
}
We claim that $b=u$, $x\in \partial \wh{\Si}$, and $u$ is an outer normal of $\widehat{\Sigma}$ at $x$. 

Indeed, take any $x_0\in \widehat{\Sigma}$ and any $c\in\bbB$, and set
\eq{
  w=(x_0-t\cos\theta\,e_{n+1})+tc\in K_t.
}
Plugging this $w$ and \eqref{eq:y-decomposition} into \eqref{eq:supporting-at-y} gives
\eq{
  0\ge \ip{w-y}{u}
  =\ip{x_0-x}{u}+t\ip{c-b}{u}.
}
With $x_0=x$ and $c=u$,
\eq{
  1\le \ip{b}{u}\implies b=u.
}
Now with $c=b=u$, the inequality becomes
\eq{
  0\ge \ip{x_0-x}{u}\quad\forall\,x_0\in \widehat{\Sigma},
}
so $x\in \partial \wh{\Si}$ and $u$ is an outer normal of $\widehat{\Sigma}$ at $x$.

Since $y_{n+1}>0$ and $t>0$, we have from \eqref{eq:y-decomposition} (with $b=u$)
\eq{
  y_{n+1}=x_{n+1}-t\cos\theta+tu_{n+1}>0.
}
This implies that $x_{n+1}>0$, $x\in \Si$ and $u=\nu(x)$ (otherwise, if $x_{n+1}=0$, then we would have $u_{n+1}\le \cos\theta$ and hence $y_{n+1}\le 0$).
Substituting $b=u=\nu(x)$ into \eqref{eq:y-decomposition} yields
\eq{
  y=x+t(\nu(x)-\cos\theta\,e_{n+1})=\phi_t(x)\in\Sigma_t\setminus P.
}

\emph{Step 7.} By the previous steps, $\Si_t$ is a strictly convex (i.e. the enclosed region $\wh{\Si_t}$ is a convex body and the second fundamental form of $\Si_t$ is positive definite), $\theta$-capillary hypersurface, and for each point $x\in \Sigma$, the outward unit normal at the point $\phi_t(x)\in\Sigma_t$ is $\nu(x)$. Let $\zeta=\nu(x)-\cos\theta\, e_{n+1}$. Then
\eq{
s_{\Sigma_t}(\zeta)&=\langle \phi_t(x),\nu(x)\rangle\\
&=\langle x+t(\nu(x)-\cos\theta e_{n+1}),\nu(x)\rangle\\
&=s_{\Sigma}(\zeta)+t\ell(\zeta),
}
Since $\Sigma_t$ has the same capillary support function as $\Sigma+t\Cth$, we conclude that
\eq{
  \Sigma_t=\Sigma+t\Cth.
}
\end{proof}
\begin{rem}
The notion of capillary outer parallel sets for the capillary convex bodies was first introduced in \cite{MWW25c}, while the relation $\Sigma_t=\Sigma+t\Cth$ was observed in \cite[Rem. 2.17]{MWWX25}. \autoref{lem:tilde-nu-parallel body} clarifies the connection between capillary and classical outer parallel hypersurfaces, see \autoref{fig:example}.
\end{rem}
\begin{figure}
\begin{center}
\begin{tikzpicture}[
  scale=4,
  semithick,
  font=\small,
  plane/.style={black},
  auxplane/.style={black, dashed},
  prored/.style={red!70!black},
  problue/.style={blue!70!black},
  progreen/.style={green!50!black, dotted}
]

  \pgfmathsetmacro{\tval}{0.25}
  \pgfmathsetmacro{\cth}{sqrt(2)/2}          
  \pgfmathsetmacro{\h}{\tval*\cth}           

  \pgfmathsetmacro{\r}{1.0}
  \pgfmathsetmacro{\R}{\r+\tval}
  \pgfmathsetmacro{\a}{\r*sqrt(2)/2}

  \coordinate (Cblk) at (0,-\a);
  \coordinate (Cred) at (0,{-\a-\h});

  \draw[plane] (-1.20,0) -- (1.50,0);
  \node[anchor=west] at (1.58,0) {$x_{n+1}=0$};

  \draw[auxplane] (-1.20,-\h) -- (1.50,-\h);
  \node[anchor=west] at (1.58,-\h) {$x_{n+1}=-t\cos\theta$};

  \draw[black] (Cblk) ++(45:\r) arc (45:135:\r);

  \draw[prored] (Cred) ++(45:\r) arc (45:135:\r);

  \draw[problue] (Cred) ++(45:\R) arc (45:135:\R);

  \coordinate (Rr) at (\a,-\h);
  \coordinate (Rl) at (-\a,-\h);

  \draw[progreen] (-\a,{-\h-\tval}) -- (\a,{-\h-\tval});
  \draw[progreen] (Rr) ++(-90:\tval) arc (-90:45:\tval);
  \draw[progreen] (Rl) ++(135:\tval) arc (135:270:\tval);

  \begin{scope}[
    -{Stealth[scale=0.9]},
    draw=gray,
    dotted
  ]
    \foreach \ang in {45,55,...,135} {
      \draw ($(Cred)+(\ang:\r)$) -- ($(Cred)+(\ang:\R)$);
    }

    \foreach \xx in {-0.60,-0.30,0,0.30,0.60} {
      \draw (\xx,-\h) -- (\xx,{-\h-\tval});
    }

    \foreach \ang in {-90,-75,...,45} {
      \draw (Rr) -- ($(Rr)+(\ang:\tval)$);
    }
    \foreach \ang in {135,150,...,270} {
      \draw (Rl) -- ($(Rl)+(\ang:\tval)$);
    }
  \end{scope}

  \coordinate (LblBluePt) at ($(Cred)+(70:\R)$);
  \node[text=blue!70!black, anchor=west] at ($(LblBluePt)+(0.03,0.02)$) {$\Sigma_t$};

  \coordinate (TopBlk) at ($(Cblk)+(90:\r)$);
  \node[text=black, anchor=north] at ($(TopBlk)+(0.18,0.00)$) {$\Sigma$};

  \node[text=red!70!black, anchor=north] at (0.15,0.00)
    {$\Sigma-t\cos\theta\,e_{n+1}$};

\end{tikzpicture}
\caption{Capillary vs. classical outer parallel hypersurfaces}
\label{fig:example}
\end{center}
\end{figure}
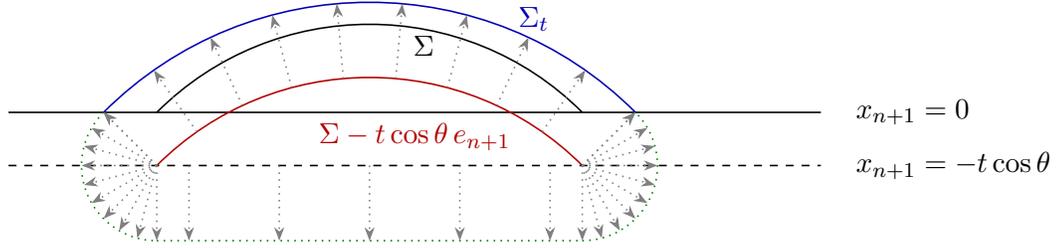 

For $\rho>0$ and a Borel set $\omega\sub\Cth$,
the local outer parallel set of $\wh\Si$ in the directions of $\omega$ can be defined by
\eq{\label{eq:def-B-rho}
B_{\rho,\theta}(\wh\Si,\omega)
&:= \left\{y\in\ov{\Rplus}:
\begin{array}{l}
\exists x\in\Si,\, 0<t<\rho,\, \text{s.t.}\\
y=x+t\tilde{\nu}(x),\,\tilde{\nu}(x)\in\omega
\end{array}
\right\}.
}

\begin{lemma}\label{lem:local-Steiner}
Let $\Si\sub\ov{\Rplus}$ be a strictly convex $\theta$-capillary
hypersurface with principal curvatures
$\kappa=(\kappa_1,\dots,\kappa_n)$ and area element $d\mu$. Then, for every
Borel set $\omega\sub\Cth$ and every $\rho>0$,
\eq{\label{eq:local-Steiner-geom}
\vol\bigl(B_{\rho,\theta}(\wh\Si,\omega)\bigr)
=\sum_{j=0}^n \frac{\rho^{n+1-j}}{n+1-j}
\int_{\Si\cap\tilde{\nu}^{-1}(\omega)}
  \bigl(1-\cos\theta\ip{\nu}{e_{n+1}}\bigr)\si_{n-j}(\kappa)\,d\mu.
}
\end{lemma}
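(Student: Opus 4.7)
The plan is to parametrize the local parallel set as the image of the map
\[
\Phi \colon A \times (0,\rho) \to \ov{\Rplus}, \qquad \Phi(x,t) := x + t\ti\nu(x),
\]
where $A := \Si \cap \ti\nu^{-1}(\omega)$, and to compute $\vol(\Phi(A\times(0,\rho)))$ via the area formula. This reduces the proof to two ingredients: an explicit formula for the Jacobian of $\Phi$, and the injectivity of $\Phi$.

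For the Jacobian, I would fix $(x,t)$ and choose an orthonormal basis $\{e_1,\dots,e_n\}$ of $T_x\Si$ consisting of principal directions with principal curvatures $\ka_1,\dots,\ka_n$. Step 2 of the proof of \autoref{lem:tilde-nu-parallel body} gives $d\Phi(e_i,0) = (1+t\ka_i)e_i$, and directly $\del_t\Phi = \ti\nu = \nu - \cos\theta\,e_{n+1}$. Decomposing $e_{n+1}$ in the orthonormal frame $\{e_1,\dots,e_n,\nu\}$ of $\Rn$, the matrix of $d\Phi$ in this frame is upper triangular with diagonal entries $(1+t\ka_1),\dots,(1+t\ka_n),\,1-\cos\theta\ip{\nu}{e_{n+1}}$, so
\[
|\det d\Phi(x,t)| = \bigl(1-\cos\theta\ip{\nu}{e_{n+1}}\bigr)\prod_{i=1}^n (1+t\ka_i) = \bigl(1-\cos\theta\ip{\nu}{e_{n+1}}\bigr)\sum_{j=0}^n t^{n-j}\si_{n-j}(\ka).
\]

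For injectivity, I would exploit the identity $K_t + (t'-t)\bbB = K_{t'} + (t'-t)\cos\theta\,e_{n+1}$, which follows by expanding both sides. Since $\cos\theta<1$, the shifted ball $\bbB - \cos\theta\,e_{n+1}$ contains $0$ in its interior, so the identity yields $K_t\sub\op{int}(K_{t'})$ whenever $t<t'$. Intersecting with $\ov{\Rplus}$ then forces $\wh{\Si_t}\cap\Rplus\sub\op{int}(\wh{\Si_{t'}})$. Hence an interior point $y$ of $\Rplus$ belongs to at most one $\Si_{t}$; this forces $t=t'$, and the injectivity of $\phi_t$ from \autoref{lem:tilde-nu-parallel body} then forces $x=x'$. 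Points of $\Phi(A\times(0,\rho))$ lying on $\{x_{n+1}=0\}$ form a set of $\Hd^{n+1}$-measure zero and can be discarded. Applying the area formula to $\Phi$ on $A\times(0,\rho)$ with product measure $d\mu\otimes dt$, interchanging sum and integrals, and using $\int_0^\rho t^{n-j}\,dt = \rho^{n+1-j}/(n+1-j)$, will then yield \eqref{eq:local-Steiner-geom}. I expect the only non-routine step to be the injectivity argument; the Jacobian computation is a direct reuse of the principal-direction calculation already carried out in \autoref{lem:tilde-nu-parallel body}, and the area-formula step is standard.
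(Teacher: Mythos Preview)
Your proposal is correct and follows essentially the same route as the paper: parametrize $B_{\rho,\theta}(\wh\Si,\omega)$ by $\Phi(x,t)=x+t\ti\nu(x)$, compute the Jacobian in a principal frame as $(1-\cos\theta\ip{\nu}{e_{n+1}})\prod_i(1+t\ka_i)$, and integrate in $t$. The paper is terser at exactly the point you flag as non-routine---it writes the volume integral directly after invoking \autoref{lem:tilde-nu-parallel body} without spelling out injectivity across $t$---whereas your nesting argument via $K_{t'}=K_t+(t'-t)(\bbB-\cos\theta\,e_{n+1})\supset\op{int}(K_t)$ makes this step explicit and is a clean way to justify the change-of-variables.
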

\begin{proof}
The local Steiner-type formula was previously stated in \cite{MWW25c}. For completeness, we give a proof here.
Let
\eq{
  \Phi:\Si\times(0,\infty)\to\ov{\Rplus},\quad
  \Phi(x,t):=x+t\tilde{\nu}(x).
}
By \autoref{lem:tilde-nu-parallel body}, $\Phi$ maps $\Si$ to  strictly convex, $\theta$-capillary hypersurfaces.

For a given Borel set $\omega\sub\Cth$, the definition
\eqref{eq:def-B-rho} gives
\eq{
  B_{\rho,\theta}(\wh\Si,\omega)
  = \Phi\Bigl(
        \bigl(\Si\cap\tilde{\nu}^{-1}(\omega)\bigr)\times(0,\rho)
     \Bigr).
}
Hence
\eq{\label{eq:vol-Jac}
\vol\bigl(B_{\rho,\theta}(\wh\Si,\omega)\bigr)
  = \int_{\Si\cap\tilde{\nu}^{-1}(\omega)}
      \int_0^\rho J(x,t)\,dt\,d\mu(x),
}
where $J(x,t)$ denotes the Jacobian of $\Phi$ at $(x,t)$.

Set $e=-e_{n+1}$. We have
\eq{
  J(x,t)
  &= \ip{\tilde{\nu}(x)}{\nu(x)}
    \prod_{i=1}^n (1+t\kappa_i(x)) \\
  &= \bigl(1+\cos\theta\,\ip{\nu(x)}{e}\bigr)
    \prod_{i=1}^n (1+t\kappa_i(x)) \\
  &= \bigl(1+\cos\theta\,\ip{\nu(x)}{e}\bigr)
    \sum_{j=0}^n \si_{n-j}(\kappa(x))t^{n-j}.
}
Inserting this into \eqref{eq:vol-Jac} and integrating in $t$ yields
\eq{
\vol\bigl(B_{\rho,\theta}(\wh\Si,\omega)\bigr)
&= \int_{\Si\cap\tilde{\nu}^{-1}(\omega)}
      \int_0^\rho
        \bigl(1+\cos\theta\ip{\nu}{e}\bigr)
        \sum_{j=0}^n \si_{n-j}(\kappa)t^{n-j}
      \,dt\,d\mu\\
&= \sum_{j=0}^n
     \frac{\rho^{n+1-j}}{n+1-j}
     \int_{\Si\cap\tilde{\nu}^{-1}(\omega)}
       \bigl(1+\cos\theta\ip{\nu}{e}\bigr)
       \si_{n-j}(\kappa)\,d\mu.
}
\end{proof}

\begin{defn}\label{capillary-k-area-measure}
Let $\theta\in(0,\pi/2)$ and suppose $\Si\subset\R^{n+1}_+$ is a strictly convex, $\theta$-capillary hypersurface. 
For a Borel set $\omega\subset\Cth$, the capillary $k$-th area measure
of $\wh\Si$ over $\omega$ can be defined by (see also \cite{MWW25c})
\eq{\label{eq:cap-k-SA-local}
  S_{k,\theta}(\wh\Si,\omega)
  := \binom{n}{k}^{-1}
     \int_{\tilde{\nu}^{-1}(\omega)}
       (1-\cos\theta\,\langle\nu,e_{n+1}\rangle)
       \sigma_{n-k}(\kappa)\,d\mu.
}

The capillary $k$-th area measure $ S_{k,\theta}(\wh\Si,\cdot)$ is absolutely continuous with respect to
the $n$-dimensional Hausdorff measure $\cH^n\mres\Cth$, with density
\eq{\label{eq:cap-k-SA-density}
  dS_{k,\theta}(\wh\Si,\xi)
  =\binom{n}{k}^{-1}\ell(\xi)\sigma_k\bigl(\tau^\sharp[s](\xi)\bigr)\,d\cH^n(\xi),
  \quad \xi\in\Cth.
}
In particular,
\eq{
  S_{k,\theta}(\wh\Si,\omega)
  = \binom{n}{k}^{-1}\int_\omega \ell(\xi)\sigma_k\bigl(\tau^\sharp[s](\xi)\bigr)\,d\cH^n(\xi),
  \quad \omega\subset\Cth\ \text{Borel}.
}
\end{defn}

\begin{rem}\label{area measure-standard-vs-capillary}
The capillary $k$-th area measure $S_{k,\theta}(\wh{\Sigma},\cdot)$ is defined on $\Cth$ via the local Steiner formula and is absolutely continuous with
respect to spherical Lebesgue measure on $\Cth$; in particular, every Borel set $\omega\subset\Cth$ with $\omega\subset\partial\Cth$ satisfies
$S_{k,\theta}(\wh{\Sigma},\omega)=0$. If $\omega\subset\Cth$ is a Borel set with $\omega\Subset\op{int}(\Cth)$, then
\eq{
  S_{k,\theta}(\wh{\Sigma},\omega)
  = \ell\,S_k(\wh{\Sigma},T\omega),
}
so on such sets the capillary $k$-th area measure agrees (up to the weight $\ell$) with the restriction of the classical $k$-th area measure of $\wh{\Sigma}$.

A difference can appear when $\omega$ meets $\partial\Cth$. By construction, $S_{k,\theta}(\wh{\Sigma},\cdot)$ carries no singular part supported on $\partial\Cth$, whereas $S_k(\wh{\Sigma},\cdot)$ may have additional mass on normals associated with $\partial\Sigma$. In particular,
for $k\leq n-1$ the measure $S_{k}(\wh{\Sigma},\cdot)$ may charge sets of normals whose images lie in $\partial\Cth$, while $S_{k,\theta}(\wh{\Sigma},\cdot)$ assigns zero mass to such sets. It is therefore natural to regard $S_{k,\theta}(\wh{\Sigma},\cdot)$ as the absolutely continuous part of $S_k(\wh{\Sigma},\cdot)\mres\Sn_\theta$, transported to $\Cth$ via $T$.

For the top-order case $k=n$,  $\wh{\Sigma}\cap\{x_{n+1}=0\}$ contributes to $S_n(\wh{\Sigma},\cdot)$ only through the direction $-e_{n+1}\notin\bbS^n_\theta$. Thus, there is no discrepancy between 
$S_{n,\theta}(\wh{\Sigma},\cdot)$ and $S_n(\wh{\Sigma},T(\cdot))$ on Borel sets 
$\omega\subset\Cth$.
\end{rem}

\begin{thm}\label{thm:grad-bound} Assume $-\Si$ is the graph of a convex function $f\in C^1(\Omega)$ on a bounded, closed convex set $\Omega$ with $f=0$ on $\partial\Om$.  Then for all $x'\in \Om$,
\eq{
|Df(x')|\le \tan\theta, \quad |f(x')|\le \tan\theta\,\dist(x',\partial\Om).
}
 Set $H=\|f\|_{C(\Omega)}=\max\limits_{x\in \Si}\langle x,e_{n+1}\rangle$. If $\Sigma$ is even, then 
 \eq{
 B_{\frac{H}{\tan\theta}}(0)\subset \Om,\quad\widehat{\cC}_{\theta,\frac{H}{\tan\theta\sin\theta}}\subset \wh{\Si}.
 }
\end{thm}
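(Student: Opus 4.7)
My plan is to view $\Sigma$ itself as the graph of the nonnegative concave function $u(y') := -f(-y')$ over the reflected domain $-\Omega$, vanishing on its boundary; in the even case $\Omega = -\Omega$ and $u = -f$, so the reflection is trivial. Both pointwise estimates for $f$ translate directly into estimates for $u$ and will follow from the uniform bound $|Du| \leq \tan\theta$.

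To prove this gradient bound I would first compute the boundary slope: at any $y \in \partial(-\Omega)$, the identity $u \equiv 0$ on the boundary forces $Du(y)$ to be normal to $\partial(-\Omega)$ and to point inward (since $u \geq 0$ inside). Writing the outward unit normal of $\Sigma$ at $(y,0)$ as $\nu = (-Du(y), 1)/\sqrt{1+|Du(y)|^2}$ and imposing the capillary condition $\langle \nu, e_{n+1}\rangle = \cos\theta$ yields $|Du(y)| = \tan\theta$. For an interior point $x_0$ and a unit vector $v$, I then consider the one-dimensional slice $g(t) := u(x_0 + tv)$ on its maximal interval $[t_-, t_+]$: this is a nonnegative concave $C^1$ function with $g(t_\pm) = 0$ and $|g'(t_\pm)| \leq \tan\theta$ by the boundary computation. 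Since $g'$ is non-increasing, $|g'| \leq \tan\theta$ throughout, and evaluating at $t = 0$ while varying $v$ gives $|Du(x_0)| \leq \tan\theta$. The distance bound for $f$ then follows by integrating $|Du|$ along the segment from $x'$ to its nearest boundary point, which lies in $\Omega$ by convexity.

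For the ball inclusion in the even case, $\Omega$ and $f$ are even, so $u$ is an even concave function; the average bound $u(0) \geq (u(x') + u(-x'))/2 = u(x')$ gives $u(0) = \max u = H$, and the distance bound at the origin yields $\dist(0,\partial\Omega) \geq H/\tan\theta$. For the cap inclusion I would set $r = H/(\tan\theta \sin\theta)$, so that $r\sin\theta = H/\tan\theta$ and the base disk of $\widehat{\mathcal C}_{\theta, r}$ coincides with the inscribed ball; combining $u(0) = H$ with the $\tan\theta$-Lipschitz bound along the segment from the origin gives $u(x') \geq H - \tan\theta |x'|$, so it suffices to verify $\sqrt{r^2 - \rho^2} - r\cos\theta \leq H - \tan\theta \rho$ for $\rho = |x'| \in [0, r\sin\theta]$. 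This scalar comparison is the only delicate step; the specific scale $r = H/(\tan\theta\sin\theta)$ is exactly what forces the difference (linear minus cap) together with its derivative to vanish at the equatorial endpoint $\rho = r\sin\theta$, and since this difference is strictly convex in $\rho$ (second derivative $r^2/(r^2-\rho^2)^{3/2}$) that endpoint is its global minimum, so the inequality holds on the full interval.
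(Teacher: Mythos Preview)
Your proposal is correct and follows essentially the same route as the paper: the boundary computation of $|Df|=\tan\theta$ via the capillary condition, the interior gradient bound by convexity (your one-dimensional slicing is just an explicit proof of the paper's one-line ``max of $|Df|$ is attained on $\partial\Omega$''), the distance estimate by integrating along the segment to the nearest boundary point, and the identification $f(0)=-H$ in the even case. For the cap inclusion the paper phrases the final step geometrically---it inserts the intermediate $\theta$-cone with apex $(0,-H)$ and base $B_{H/\tan\theta}(0)$, shows $f$ lies below its lateral graph $-H+\tan\theta|x'|$, and then notes the cap sits inside this cone---whereas you verify the equivalent scalar inequality $\sqrt{r^2-\rho^2}-r\cos\theta\le H-\tan\theta\,\rho$ directly by a convexity argument; these are the same comparison.
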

\begin{proof}
Since $-\Si$ is the graph of $f$, we can write
\eq{
  -\Si = \{(x',f(x')):x'\in \Om\},\quad f\le 0,\quad f=0\ \text{on }\partial\Om.
}
At a boundary point $x'_0\in\partial\Om$, the upward unit normal of the graph of $f$ is
\eq{
  \nu (x_0')= \frac{1}{\sqrt{1+|Df(x'_0)|^2}}\bigl(-Df(x'_0),1\bigr).
}
By the capillary condition,
\eq{
 \langle\nu,e_{n+1}\rangle = \cos\theta
  \quad\Longrightarrow\quad
  \frac{1}{\sqrt{1+|Df|^2}}=\cos\theta,
}
hence $|Df(x'_0)|=\tan\theta$ for every $x'_0\in\partial\Om$. Since $f$ is convex and $\Omega$ is bounded and convex, the maximum of $|Df|$ over $\Om$ is attained on $\partial\Om$, so
\eq{\label{eq:boundary-Df}
  |Df|\le \tan\theta \quad\text{in }\Om.
}

Let $x'\in\Om$ and choose $y'\in\partial\Om$ such that
\eq{
  |x'-y'| = \dist(x',\partial\Om).
}
Set
\eq{
  \xi := \frac{x'-y'}{|x'-y'|},\quad
  g(t) := f\bigl(y'+t\xi\bigr),\quad t\in[0,|x'-y'|].
}
Then $g$ is convex, $g(0)=f(y')=0$ and $g(|x'-y'|)=f(x')\le 0$. Using
\eqref{eq:boundary-Df}, we have $|g'(t)|\le\tan\theta$, hence
\eq{
  |f(x')|
   \le \int_0^{|x'-y'|} |g'(t)|\,dt
   \le \tan\theta\,\dist(x',\partial\Om).
}
This gives the second inequality.

Assume now that $\Si$ is even. Then $f$ is even, i.e.
\eq{
  f(-x') = f(x')\quad\fa x'\in\Om,
}
and $\Om$ is origin-symmetric. For any $x'\in\Om$, convexity and evenness give
\eq{
  f(0)
  \le \tfrac12 f(x') + \tfrac12 f(-x')
  = f(x'),
}
so $f(0)=\min_\Om f=-H$.

Applying the distance estimate at $x'=0$ yields
\eq{
  H = -f(0)\le \dist(0,\partial\Om)\tan\theta,
}
and therefore
\eq{
  B_{\frac{H}{\tan\theta}}(0)\subset\Om.
}

To prove the last claim, consider the ($\theta$-capillary) cone in $\bbR^{n+1}$ with apex at $(0,-H)$
and base $B_{\frac{H}{\tan\theta}}(0)\subset \Omega$:
\eq{
  \cK^-
  := \Bigl\{(x',x_{n+1}): |x'|\le \frac{H}{\tan\theta},\
     -H\le x_{n+1}\le -H+\tan\theta\,|x'|\Bigr\}.
}
The lateral boundary of $\cK^-$ is the graph of
\eq{
  g(x') = -H + \tan\theta\,|x'|
  \quad\text{on } B_{\frac{H}{\tan\theta}}(0).
}
Using \eqref{eq:boundary-Df} and $f(0)=-H$, for $|x'|\le H/\tan\theta$ we have
\eq{
  f(x') - f(0)
   = \int_0^1 \langle Df(tx'), x'\rangle\,dt
   \le \tan\theta\,|x'|,
}
hence
\eq{
  f(x') \le -H+\tan\theta\,|x'| = g(x').
}
Thus, for every such $x'$,
\eq{
  \{x_{n+1}: g(x')\le x_{n+1}\le 0\}
  \subset
  \{x_{n+1}: f(x')\le x_{n+1}\le 0\}.
}
Therefore, $\cK^-\subset\wh{\Si^-}$, where
\eq{
  \wh{\Si^-}
   := \{(x',x_{n+1}): x'\in\Om,\ f(x')\le x_{n+1}\le 0\}
}
is the region between the graph of $f$ and $\{x_{n+1}=0\}$. 

Since the cap $-\widehat{\cC}_{\theta,\frac{H}{\tan\theta\sin\theta}}$ is
contained in $\cK^-$, we obtain
\eq{
  \widehat{\cC}_{\theta,\frac{H}{\tan\theta\sin\theta}}\subset\wh{\Si}.
}
This completes the proof.
\end{proof}
\section{Non-collapsing Estimates}\label{sec:non-collapsing}
Let $\theta\in (0,\pi/2)$, $p\in (1,k+1)$ and $q\in [1,p]$. Let $\Si$ be an even, strictly convex, $\theta$-capillary hypersurface whose capillary support function $s>0$ solves
\eq{\label{eq:CMK}
s^{1-q}\sigma_k(\tau^\sharp[s])=\phi \quad \text{in }\Cth,
}
with the prescribed function $\phi\in C^{\infty}(\Cth)$. Assume $\phi_0\leq \phi\leq \phi_1$ with the constants $0<\phi_0<1<\phi_1$.

\begin{lemma}\label{lem:upper-s-bound}
Let $s$ satisfy \eqref{eq:CMK}. 
Then there exists a constant
\eq{
C_0=C_0(n,k,p,\theta,\phi_0,\phi_1)>1
}
such that 
\eq{
s\le C_0\quad\text{on }\cC_\theta.
}
\end{lemma}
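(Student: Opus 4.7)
The plan is to apply the maximum principle to $v := s/\ell$, combine the resulting bound on $\min s$ with Theorem~\ref{thm:grad-bound} to control the height $H$, and then upgrade to a sup-bound on $s$ via a blow-up argument exploiting the subcriticality of the equation for $q<k+1$.

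A short direct calculation (using $\ell=\sin^2\theta-\cos\theta\,\zeta_{n+1}$ and the outward conormal $\mu=(\cos\theta\,u-e_{n+1})/\sin\theta$ on $\partial\mathcal{C}_\theta$) shows $\nabla_\mu\ell=\cos\theta\sin\theta=\cot\theta\,\ell$, so $v=s/\ell$ satisfies the homogeneous Neumann condition $\nabla_\mu v=0$. At a point $\zeta_1$ minimizing $v$ on $\overline{\mathcal{C}_\theta}$ the Neumann condition removes any Hopf obstruction on the boundary, so $\nabla^2 v(\zeta_1)\ge 0$ as a symmetric form. Since $\nabla v(\zeta_1)=0$, the identity $\tau^\sharp[s]=v\,\tau^\sharp[\ell]+\ell\,\nabla^2 v+\nabla v\otimes\nabla\ell+\nabla\ell\otimes\nabla v$ reduces to $\tau^\sharp[s](\zeta_1)=v(\zeta_1)\,I+\ell(\zeta_1)\,\nabla^2 v(\zeta_1)\ge v(\zeta_1)\,I$, and hence $\sigma_k(\tau^\sharp[s])(\zeta_1)\ge\binom{n}{k}v(\zeta_1)^k$. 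Substituting into the equation and using $\phi\le\phi_1$, $\ell\le\sin^2\theta$ yields $v(\zeta_1)^{k-q+1}\le\sin^{2(q-1)}\theta\,\phi_1/\binom{n}{k}$, so $\min_{\mathcal{C}_\theta}s\le C_1\sin^2\theta$ for an explicit $C_1$. Next, Theorem~\ref{thm:grad-bound} and evenness give $\widehat{\mathcal{C}}_{\theta,H/(\tan\theta\sin\theta)}\subset\widehat\Sigma$, whence $s\ge (H/(\tan\theta\sin\theta))\,\ell\ge c(\theta)H$ pointwise; combined with the previous bound this yields a height estimate $H\le C_2$.

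To upgrade this to a sup-bound on $s$, I would argue by contradiction. Suppose $M_j:=\max s_j\to\infty$ along a sequence of solutions satisfying the hypotheses uniformly. The inclusion $\widehat\Sigma_j\subset B_{M_j/\sin\theta}(0)$ forces $\tilde s_j:=s_j/M_j$ to be uniformly bounded and equi-Lipschitz, so a subsequence converges uniformly to a limit $\tilde s$. The rescaled equation $\sigma_k(\tau^\sharp[\tilde s_j])=M_j^{q-1-k}\tilde s_j^{q-1}\phi_j$ has right-hand side tending to zero uniformly since $q-1-k<0$, so $\sigma_k(\tau^\sharp[\tilde s])=0$ in the weak sense; this forces the rank of $\tau^\sharp[\tilde s]$ to be at most $k-1$ and the limiting convex set to have ambient dimension at most $k$. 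Since $H_j/M_j\to 0$ by the height bound, the limit also collapses into $\{x_{n+1}=0\}$. The hardest step is to rule out such a degenerate even $\theta$-capillary limit with $\max\tilde s=1$: this requires exploiting the evenness and the capillary boundary condition (both preserved in the limit) together with the fine structure of $\sigma_k$-solutions—possibly via a second, more delicate blow-up at a maximizer of $\tilde s_j$, or equivalently by replacing the compactness step with a capillary Alexandrov--Fenchel-type inequality for even bodies that bounds $V_{k+1}(s)$ from below by a power of $\max s$ exceeding $k+1$.
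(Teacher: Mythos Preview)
Your first two steps are correct and yield geometrically useful information (upper bounds on $\min s$ and on the height $H$) that the paper does not isolate. However, the blow-up step has a genuine gap that you yourself flag: you cannot rule out a nontrivial degenerate limit $K_\infty\subset e_{n+1}^\perp$ with $\sigma_k(\tau^\sharp[\tilde s])=0$ on $\Cth$. The difficulty is real. The weak limit of $\sigma_k(\tau^\sharp[\tilde s_j])\,d\sigma$ on $\Cth$ is (up to the factor $\ell$) the restriction of the $k$-th area measure of the rescaled bodies; for a lower-dimensional limit body this measure need not vanish globally---it may concentrate on $\mathbb{S}^n\cap L^\perp$ with $L=\operatorname{lin}(K_\infty)$---and the fact that it vanishes on compact subsets of the interior of $\Cth$ does not by itself pin down $\dim K_\infty$ or contradict $\max\tilde s=1$. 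Completing this would require exactly the area-measure structure theory (Goodey--Kiderlen--Weil, belt estimates) that the paper later deploys for the \emph{lower} height bound, so your upper-bound argument would become at least as heavy as the non-collapsing analysis it is meant to precede.

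The paper sidesteps this with a short integral argument. Integration by parts plus Newton--Maclaurin gives
\[
c_k'\int_{\Cth} s^{\,1+\frac{k-1}{k}(q-1)}\phi^{\frac{k-1}{k}}
\;\le\; \int_{\Cth} s\,\sigma_{k-1}
\;=\; c_k\int_{\Cth} \ell\,\sigma_k
\;=\; c_k\int_{\Cth} \ell\,\phi\,s^{q-1},
\]
which for $q<k+1$ reads $\int s^{\alpha}\le C\int s^{\beta}$ with $\alpha>\beta$; H\"older then yields a uniform $L^1$ bound on $s$, and a standard argument from \cite{HIS25} upgrades this to $C^0$. This is exactly an instance of your suggestion~(2): the identity $\int s\,\sigma_{k-1}=c_k\int\ell\,\sigma_k$ is a capillary mixed-volume identity, and combined with Newton--Maclaurin it plays the role of the Alexandrov--Fenchel-type bound you were reaching for. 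Pursued this way, neither your $\min s$ estimate nor your height bound is needed.
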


\begin{proof}
Throughout the proof, constants depend only on
$(n,k,p,\theta,\phi_0,\phi_1)$.

Integrating by parts (cf. \cite[Cor. 2.10]{MWWX25}) and using the
Newton--Maclaurin inequality yields
\eq{\label{eq:IBP-q}
c_k' \int_{\Cth} s^{1+\frac{k-1}{k}(q-1)} \phi^{\frac{k-1}{k}} 
\le \int_{\Cth} s \sigma_{k-1} 
= c_k \int_{\Cth} \ell \sigma_k 
= c_k \int_{\Cth} \ell \phi s^{q-1} .
}
We can rewrite \eqref{eq:IBP-q} as
\eq{\label{eq:alpha-beta-q}
\int_{\cC_\theta} s^{\alpha(q)}
\le C_1 \int_{\cC_\theta} s^{\beta(q)} ,
}
where
\eq{
\beta(q) := q-1, \quad
\alpha(q) := 1 + \frac{k-1}{k}(q-1)
= 1 + \frac{k-1}{k} \beta(q),
}
and $C_1 = C_1(n,k,\theta,\phi_0,\phi_1)>1$.
Since $q\in[1,p]$ with $1<p<k+1$, we have
\eq{
0 \le \beta(q) \le p-1 < k, \quad
1 \le \alpha(q) \le 1 + \frac{k-1}{k}(p-1) <  k.
}

Assume $1<q\le p$. Then $\beta(q)>0$ and $0<\beta(q)<\alpha(q)$,
and by H\"older's inequality,
\eq{
\Bigl( \int_{\cC_\theta} s^{\beta(q)}  \Bigr)^{\frac{\alpha(q)}{\beta(q)}}\le |\cC_\theta|^{\frac{\alpha(q)}{\beta(q)}-1}\int_{\cC_\theta} s^{\alpha(q)}.
}
Combining with \eqref{eq:alpha-beta-q} we obtain
\eq{\label{eq:Lbeta-q}
\int_{\cC_\theta} s^{\beta(q)}
\le
|\cC_\theta| C_1^{\frac{\beta(q)}{\alpha(q)-\beta(q)}}.
}

Note that
\eq{
\alpha(q) - \beta(q)
= 1 - \frac{1}{k} \beta(q),
\quad
\frac{\beta(q)}{\alpha(q)-\beta(q)}
= \frac{\beta(q)}{1-\beta(q)/k}.
}
Since $\beta(q)\in[0,p-1]$, we have
\eq{
\frac{\beta(q)}{1-\beta(q)/k} \le E_p :=  \frac{k(p-1)}{k+1-p}.
}
Thus for $1<q\le p$,
\eq{
\int_{\cC_\theta} s^{q-1}
= \int_{\cC_\theta} s^{\beta(q)} 
\le
|\cC_\theta| C_1^{E_p}.
}
Choosing the constant larger if necessary,
\eq{
C_2 = C_2(n,k,p,\theta,\phi_0,\phi_1)
}
we have for all $q\in[1,p]$:
\eq{\label{eq:Lq-1-uniform}
\int_{\cC_\theta} s^{q-1}  \le C_2
}

Now we return to \eqref{eq:IBP-q} and we obtain
\eq{
c_k' \int_{\Cth} s^{1+\frac{k-1}{k}(q-1)} \phi^{\frac{k-1}{k}}
\le
c_k \phi_1
\int_{\Cth} s^{q-1} 
\le
C_3
}
for some $C_3 = C_3(n,k,p,\theta,\phi_0,\phi_1)$.
Using $\phi\ge\phi_0$, this implies
\eq{\label{eq:Lalpha-q-uniform}
\int_{\Cth} s^{\alpha(q)}  \le C_4
}
for all $q\in[1,p]$, with $C_4$ depending only on
$(n,k,p,\theta,\phi_0,\phi_1)$.

Since $\alpha(q)\ge 1$,
\eqref{eq:Lalpha-q-uniform} also yields a uniform $L^1$ bound for $s$:
\eq{
\int_{\cC_\theta} s 
\le |\cC_\theta|^{1-\frac1{\alpha(q)}}
\Bigl( \int_{\cC_\theta} s^{\alpha(q)} \Bigr)^{\frac1{\alpha(q)}}
\le C_5
}
for all $q\in[1,p]$, with $C_5$ depending only on
$(n,k,p,\theta,\phi_0,\phi_1)$.

Finally, the argument in the proof of \cite[Lem. 4.6]{HIS25} implies
\eq{
s \le C_0 \quad\text{in } \cC_\theta
}
for all $q\in[1,p]$ with $C_0 = C_0(n,k,p,\theta,\phi_0,\phi_1)$. This completes the proof.
\end{proof}

\begin{prop}\label{prop:ratio}
Let $\tilde{s}:=s/\ell$ where $s$ solves \eqref{eq:CMK}. Then
\eq{\label{eq:ratioPDE}
\sigma_k\Bigl(\ell\,\bHess \tilde{s} + \nabla \tilde{s}\otimes \nabla \ell + \nabla \ell\otimes \nabla \tilde{s} + \tilde{s}g,g\Bigr)
=(\tilde{s}\ell)^{q-1}\phi\quad\text{in }\Cth
}
and $\bn_{\mu}\tilde{s}=0$ on $\partial\Cth$.
\end{prop}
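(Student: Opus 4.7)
The plan is to rewrite the operator $\tau[s]:=\nabla^2 s+sg$ in terms of $\tilde s$ via the product rule, exploiting a single structural identity for the model cap, and similarly handle the boundary condition by the quotient rule.

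The key preliminary fact to establish is that $\ell$ itself satisfies
\eq{
\tau[\ell]=\nabla^2\ell+\ell g=g\quad\text{on }\Cth.
}
This is the capillary analogue of $\tau^\sharp[h]=I$ for the unit ball. I would verify it by direct computation: writing $\ell(\zeta)=\sin^2\theta-\cos\theta\langle\zeta,e_{n+1}\rangle$ and using that on the unit sphere the Hessian of the linear coordinate $\langle u,e_{n+1}\rangle$ is $-\langle u,e_{n+1}\rangle g$, one obtains $\nabla^2\ell=\cos\theta(\langle\zeta,e_{n+1}\rangle+\cos\theta)g$, after which $\nabla^2\ell+\ell g=g$ is immediate. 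I also record the boundary identity $\nabla_\mu\ell=\cot\theta\,\ell$ on $\partial\Cth$, which holds because $\ell$ is the capillary support function of the model cap $\Cth$ itself.

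With these in hand, write $s=\tilde s\ell$. The product rule gives
\eq{
\nabla^2 s=\tilde s\,\nabla^2\ell+\ell\,\nabla^2\tilde s+\nabla\tilde s\otimes\nabla\ell+\nabla\ell\otimes\nabla\tilde s,
}
and adding $sg=\tilde s\ell g$ yields
\eq{
\tau[s]=\tilde s(\nabla^2\ell+\ell g)+\ell\,\nabla^2\tilde s+\nabla\tilde s\otimes\nabla\ell+\nabla\ell\otimes\nabla\tilde s=\tilde s\,g+\ell\,\nabla^2\tilde s+\nabla\tilde s\otimes\nabla\ell+\nabla\ell\otimes\nabla\tilde s,
}
where the last equality uses $\tau[\ell]=g$. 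Substituting this expression for $\tau[s]$ into \eqref{eq:CMK}, rewritten as $\sigma_k(\tau^\sharp[s])=s^{q-1}\phi=(\tilde s\ell)^{q-1}\phi$, produces the PDE \eqref{eq:ratioPDE}.

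For the boundary condition, the quotient rule gives
\eq{
\nabla_\mu\tilde s=\frac{\nabla_\mu s}{\ell}-\frac{s\,\nabla_\mu\ell}{\ell^2}=\frac{\cot\theta\,s}{\ell}-\frac{s\cdot\cot\theta\,\ell}{\ell^2}=0\quad\text{on }\partial\Cth,
}
using the Robin conditions for both $s$ and $\ell$. The entire argument is essentially a computation; the only substantive step is the identity $\tau[\ell]=g$, which is why I would isolate it at the start. There is no genuine obstacle, only bookkeeping.
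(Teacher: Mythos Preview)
Your proof is correct. The paper actually states this proposition without proof, so there is nothing to compare against; your derivation via the product rule together with the structural identity $\tau[\ell]=g$ and the capillary boundary condition $\nabla_\mu\ell=\cot\theta\,\ell$ is exactly the computation one would expect, and both auxiliary facts are verified correctly.
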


\begin{lemma}\label{thm:LowerMax}
Let $s$ solve \eqref{eq:CMK}. Then
\eq{ \label{s-max-lower-bound}
\max_{\Cth}s \ge \left(\frac{\phi_0}{\binom{n}{k}}\right)^{\frac{1}{k+1-p}}(1-\cos\theta)^{\frac{k}{k+1-p}}.
}
\end{lemma}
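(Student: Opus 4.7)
The plan is to apply the maximum principle to the quotient $\tilde{s}:=s/\ell$ rather than to $s$ itself. By \autoref{prop:ratio}, $\tilde{s}$ solves \eqref{eq:ratioPDE} together with the homogeneous Neumann condition $\nabla_\mu\tilde{s}=0$ on $\partial\Cth$. This Neumann structure is what makes the argument go through: at a boundary maximum of $s$ the capillary condition $\nabla_\mu s=\cot\theta\,s>0$ would obstruct the use of critical-point information, whereas the quotient has no such obstruction.

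Let $\zeta_0\in\ov{\Cth}$ realize $\max\tilde{s}$. The first step is to verify that $\nabla\tilde{s}(\zeta_0)=0$ and $\bHess\tilde{s}(\zeta_0)\le 0$. This is immediate if $\zeta_0\in\op{int}(\Cth)$. If $\zeta_0\in\partial\Cth$, then the tangential gradient vanishes by the boundary maximum and the normal component vanishes by the Neumann condition. For the Hessian: the tangential block is nonpositive by the boundary maximum, the normal entry $\bHess\tilde{s}(\mu,\mu)(\zeta_0)\le 0$ follows by examining $\tilde{s}$ along the inward geodesic from $\zeta_0$ using $\nabla_\mu\tilde{s}(\zeta_0)=0$, and the mixed entries vanish upon differentiating the identity $\nabla_\mu\tilde{s}=0$ tangentially and using $\nabla\tilde{s}(\zeta_0)=0$.

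A short computation exploiting $\bHess\ell+\ell g=g$ (which expresses that $\ell$ is the capillary support function of the unit cap $\Cth$) identifies the matrix inside $\sigma_k$ in \eqref{eq:ratioPDE} with $\tau[s]$. At $\zeta_0$ it reduces to $\ell\bHess\tilde{s}+\tilde{s}g\le\tilde{s}g$, so monotonicity of $\sigma_k$ on positive symmetric matrices gives
\[
\phi(\zeta_0)\,s(\zeta_0)^{q-1}=\sigma_k(\tau^\sharp[s])(\zeta_0)\le\binom{n}{k}\tilde{s}(\zeta_0)^{k}=\binom{n}{k}\frac{s(\zeta_0)^{k}}{\ell(\zeta_0)^{k}}.
\]
Combined with $\phi\ge\phi_0$ and the elementary pointwise bound $\ell\ge 1-\cos\theta$ on $\Cth$ (seen from $\ell=1-\cos\theta\langle x,e_{n+1}\rangle$ with $x\in\bbS^n_\theta$), this yields $s(\zeta_0)^{k+1-q}\ge\phi_0(1-\cos\theta)^{k}/\binom{n}{k}$. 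Since $q\le p$ and the right-hand side lies in $(0,1)$ (as $\phi_0<1$, $\cos\theta\in(0,1)$ and $\binom{n}{k}\ge 1$), raising to the larger exponent $1/(k+1-p)\ge 1/(k+1-q)$ only weakens the estimate, and we obtain
\[
\max_{\Cth}s\ge s(\zeta_0)\ge\left(\frac{\phi_0}{\binom{n}{k}}\right)^{\frac{1}{k+1-p}}(1-\cos\theta)^{\frac{k}{k+1-p}},
\]
which is \eqref{s-max-lower-bound}. The only mildly subtle ingredient is the Hessian bound at a boundary maximum point, handled as above via the Neumann condition for $\tilde{s}$.
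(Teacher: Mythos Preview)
Your proof is correct and follows essentially the same approach as the paper: both apply the maximum principle to $\tilde{s}=s/\ell$, use $\nabla\tilde{s}=0$ and $\bHess\tilde{s}\le 0$ at the maximum point to bound $\tau[s]\le \tilde{s}g$, and then combine $\sigma_k\le\binom{n}{k}\tilde{s}^k$ with $\phi\ge\phi_0$, $\ell\ge 1-\cos\theta$, and $q\le p$. Your treatment is in fact slightly more detailed than the paper's, which asserts the critical-point conditions without separately discussing the boundary case that you handle via the Neumann condition.
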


\begin{proof}
Let $\ze_\ast\in \Cth$ be a maximum point of $\tilde{s}$. Then $\bn \tilde{s}(\ze_\ast)=0$ and $\bHess \tilde{s}(\ze_\ast)\le 0$. At $\ze_\ast$,
\eq{
\tau[s](\ze_\ast)=\ell(\ze_\ast)\bHess \tilde{s}(\ze_\ast)+\tilde{s}(\ze_\ast) g \le \tilde{s}(\ze_\ast) g.
}
Hence
\eq{
\sigma_k(\tau[s](\ze_\ast))\le \sigma_k(\tilde{s}(\ze_\ast) g)=\binom{n}{k}\tilde{s}(\ze_\ast)^k.
}
Using \eqref{eq:CMK} and $\phi\ge\phi_0$, we obtain
\eq{
\phi_0\le (\tilde{s}(\ze_\ast)\ell(\ze_\ast))^{1-q}\sigma_k(\tau[s](\ze_\ast))
\le \binom{n}{k}\tilde{s}(\ze_\ast)^{k+1-q}\ell(\ze_\ast)^{1-q}.
}
Thus, by $1-\cos\theta\leq \ell$ we get
\eq{
s(\ze_\ast)\ge \left(\frac{\phi_0}{\binom{n}{k}}\right)^{\frac{1}{k+1-q}}(1-\cos\theta)^{\frac{k}{k+1-q}}.
}
Finally, \eqref{s-max-lower-bound} follows from $0<\phi_0<1$, $0<1-\cos\theta<1$ and $q\in [1,p]$ with $1<p<k+1$.
\end{proof}

\subsection{Rotationally symmetric hypersurfaces}

Define 
\eq{
r_{\mathrm{out}}:=\max_{x'\in\Omega}|x'|,\quad
r_{\mathrm{in}}:=\min_{x'\in\partial\Omega}|x'|.
}
Assume $\det D^2 f \ge \Lambda$ in $\Om$ and $f=0$ on $\partial\Om$. Consider the quadratic barrier
\eq{
Q(x')=\frac{\Lambda^{1/n}}{2}(|x'|^2-r_{\mathrm{in}}^2),\quad x'\in \Om.
}
Then $Q\ge f$ on $\partial\Om$ and $\det D^2 Q\le \det D^2 f$ in $\Omega$. By comparison principle,
\eq{\label{ineq: comparison-principle}
Q\ge f\quad\text{in }\Om \quad \implies \quad H\ge \frac{\Lambda^{1/n}}{2}r_{\mathrm{in}}^2,
}
where $H=-\min f=-f(0)$.

Recall that the Gauss curvature of $\Sigma$ is given by
\eq{\label{eq:K-graph}
\cK = \frac{\det D^2 f}{(1+|Df|^2)^{(n+2)/2}}\quad \text{in }\Omega.
}
Since $\phi=s^{1-q}\sigma_k \ge c_k s^{1-q}\sigma_n^{k/n}$ in $\Cth$ with $c_k=\binom{n}{k}$, we have
\eq{\label{eq:K-lower-pointwise}
\cK\ge c_k^{n/k}\phi^{-n/k}s^{n(1-q)/k}
}
and
\eq{\label{eq:det-lower-compact}
\det D^2 f \ge c_k^{n/k}\phi_1^{-n/k}(s_{\max})^{\frac{n(1-q)}{k}},
}
where $\phi_0\leq \phi \leq \phi_1$ with the constants $0<\phi_0<1<\phi_1$.

\begin{thm}\label{thm:H-not-zero}
Let $\Sigma$ be a rotationally symmetric, strictly convex, $\theta$-capillary hypersurface whose capillary support function $s$ satisfies \eqref{eq:CMK}.
Then
\eq{\label{eq:H-lb-quant}
H\ge H_{\star},\quad H_{\star}=H_{\star}(n,k,p,\theta,\phi_0,\phi_1).
}
In particular, $H_{\star}\cos\theta\le s\le C_0$.
\end{thm}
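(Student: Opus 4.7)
The plan is to pin $H$ away from zero by combining the uniform upper bound on $s$ (Lemma \ref{lem:upper-s-bound}), the lower bound on $s_{\max}$ (Lemma \ref{thm:LowerMax}), the Monge--Amp\`ere comparison \eqref{ineq: comparison-principle}, and the rigidity coming from rotational symmetry.

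First, I would upgrade \eqref{eq:det-lower-compact} to a uniform pointwise lower bound on $\det D^2 f$. Since $q\in[1,p]$ with $1<p<k+1$, the exponent $n(1-q)/k$ in \eqref{eq:det-lower-compact} is nonpositive; combined with $s_{\max}\le C_0$ from Lemma \ref{lem:upper-s-bound} (where $C_0>1$), this yields $s_{\max}^{n(1-q)/k}\ge C_0^{-n(p-1)/k}$, hence
\eq{
\det D^2 f \ge \Lambda := c_k^{n/k}\phi_1^{-n/k} C_0^{-n(p-1)/k}>0 \quad\text{in }\Omega,
}
with $\Lambda$ depending only on $(n,k,p,\theta,\phi_0,\phi_1)$.

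Next, rotational symmetry forces $\Omega=\ov{B_r(0)}$, so $r_{\mathrm{in}} = r_{\mathrm{out}} = r$ and $\wh\Sigma\subset \ov{B_r(0)}\times[0,H]$. The comparison \eqref{ineq: comparison-principle} then gives
\eq{
H\ge \frac{\Lambda^{1/n}}{2}r^2.
}
On the other hand, Lemma \ref{thm:LowerMax} provides $s_{\max}\ge A$ with $A=A(n,k,p,\theta,\phi_0)>0$. Since $s(\zeta)=\ip{\ti\nu^{-1}(\zeta)}{\nu}\le|\ti\nu^{-1}(\zeta)|\le\sqrt{r^2+H^2}$, we conclude $r^2+H^2\ge A^2$. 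A simple dichotomy then closes the argument: if $r\le H$ then $2H^2\ge A^2$, so $H\ge A/\sqrt 2$; if $r>H$ then $r\ge A/\sqrt 2$ and the Monge--Amp\`ere bound gives $H\ge\Lambda^{1/n}A^2/4$. Setting $H_\star:=\min\{A/\sqrt 2,\ \Lambda^{1/n}A^2/4\}$ yields \eqref{eq:H-lb-quant}.

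For the pointwise bound $s\ge H_\star\cos\theta$, rotational symmetry places the apex of $\Sigma$ at $(0,H)\in\wh\Sigma$, hence for every $\zeta\in\Cth$ with $\nu:=\zeta+\cos\theta\,e_{n+1}$,
\eq{
s(\zeta)=h_{\wh\Sigma}(\nu)\ge\ip{(0,H)}{\nu}=H\nu_{n+1}\ge H\cos\theta\ge H_\star\cos\theta,
}
while $s\le C_0$ is Lemma \ref{lem:upper-s-bound}. The main obstacle is the nonpositivity of $1-q$ when $q>1$: without the uniform upper bound on $s$ from Lemma \ref{lem:upper-s-bound}, the factor $s^{1-q}$ in \eqref{eq:CMK} could drive $\det D^2 f$ toward $0$ and destroy the Monge--Amp\`ere barrier. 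Rotational symmetry is then used twice, both to identify $r_{\mathrm{in}}=r_{\mathrm{out}}$ and to place the apex on the axis for the pointwise lower bound.
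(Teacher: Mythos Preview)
Your proof is correct and follows essentially the same route as the paper: derive a uniform lower bound $\det D^2 f\ge\Lambda$ from \eqref{eq:det-lower-compact} and the upper bound $s\le C_0$, use rotational symmetry to identify $r_{\mathrm{in}}=r_{\mathrm{out}}=r$, apply the Monge--Amp\`ere comparison \eqref{ineq: comparison-principle} to get $H\ge\tfrac12\Lambda^{1/n}r^2$, and combine with $s_{\max}\ge A$ from Lemma~\ref{thm:LowerMax} to rule out $r\to 0$. The only difference is in how you close the last step: the paper bounds $s_{\max}\le r+H$ and invokes the gradient estimate of Theorem~\ref{thm:grad-bound} (which gives $H\le r\tan\theta$, hence $r\ge c_0/(1+\tan\theta)$ directly), whereas you bound $s_{\max}\le\sqrt{r^2+H^2}$ and run a dichotomy on $r\lessgtr H$; both are valid and lead to the same conclusion with the same dependencies.
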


\begin{proof}
The upper bound $s\leq C_0$ was established in \autoref{lem:upper-s-bound}.
Due to \eqref{eq:det-lower-compact} and \eqref{ineq: comparison-principle}, we have
\eq{
H\ge \frac{c_k^{1/k}}{2}\phi_1^{-1/k}C_0^{\frac{1-q}{k}}r_{\mathrm{in}}^2\ge \frac{c_k^{1/k}}{2}\phi_1^{-1/k}C_0^{\frac{1-p}{k}}r_{\mathrm{in}}^2,
}
where we used that $C_0>1$ and $q\in [1,p]$. Since $\Si$ is rotationally symmetric, $r_{\mathrm{in}}=r_{\mathrm{out}}$ and thus $s\leq r_{\mathrm{in}}+H$. Now, by \autoref{thm:LowerMax} and \autoref{thm:grad-bound},
\eq{
c_0\leq s_{\max}\le r_{\mathrm{in}}+H\le (1+\tan\theta)r_{\mathrm{in}},
}
where $c_0=\left(\frac{\phi_0}{\binom{n}{k}}\right)^{\frac{1}{k+1-p}}(1-\cos\theta)^{\frac{k}{k+1-p}}$. Hence
\eq{
 r_{\mathrm{in}}\ge \frac{c_0}{1+\tan\theta},
}
and the lower bound on $H$ follows. Due to $s\geq H\cos\theta$, the proof is complete.
\end{proof}

\subsection{Even hypersurfaces}

The argument in \autoref{thm:H-not-zero} uses the capillary $L_p$-Christoffel-Minkowski equation mainly through the inequality $\det D^2 f \ge \Lambda$ for the Monge-Amp\`ere measure of the graph function. Taken in isolation, this scalar inequality does not exclude degeneration of the base domain $\Omega$, and within this framework one cannot obtain a uniform positive lower bound for $H$ without an additional geometric input such as the rotationally symmetric assumption in conjunction with the capillarity assumption.

For general even, strictly convex, $\theta$-capillary hypersurfaces we keep the full equation and work directly at the level of area measures. From a sequence with $H_i\to 0$ we extract, by Blaschke's selection theorem, a nontrivial limit body $K_\infty\subset e_{n+1}^\perp$ with linear span $L=\operatorname{lin}(K_\infty)$, $\dim L=m\in\{1,\dots,n\}$. Using \autoref{thm:GKW} and \autoref{cor:subspace-support} we describe $S_k(K_\infty,\cdot)$ on belts $\cB\Subset\mathbb{S}^n_\theta$ at positive distance from $\mathbb{S}^n\cap L^\perp$. The measure identity together with $0<\phi_0\le\phi\le\phi_1$ yields a uniform positive lower bound for the $h_i^{1-p}$-weighted curvature on each such belt, whereas for a body contained in $L$ the structure of $S_k$ forces these contributions to vanish (or tend to zero) as the belt shrinks. This contradiction rules out $H_i\to 0$ and gives the desired uniform height lower bound in the general even case.

We also mention the work \cite{PS24}, where a pointwise version of this argument for the standard $L_p$-Christoffel--Minkowski problem appeared. In the capillary setting such a pointwise argument is not available, since the capillary $k$-th area measure only records the absolutely continuous part of $S_k(\wh{\Sigma},\cdot)$ on $\Cth$; see \autoref{area measure-standard-vs-capillary}.

\begin{thm}[\cite{GKW11}, Thm. 6.2]\label{thm:GKW}
Let $L \subset \mathbb{R}^{n+1}$ be a linear subspace with $\dim L = m$ and $1 \le m \le n$. Let $K \subset L$ be a convex body (with nonempty interior in $L$) and $k \in \{1, \dots, m-1\}$. Then, for every nonnegative measurable function $\psi$ on $\mathbb{S}^n$,
\eq{
\int_{\mathbb{S}^n} \psi(u) \, dS_k(K, u) = c_{m,k} \int_{\mathbb{S}^{m-1} \cap L} I(\xi) \, dS_k^L(K, \xi),
}
where
\eq{
I(\xi) := \int_{\mathbb{S}^{n-m} \cap L^\perp} \int_{0}^{\pi/2} \psi(\sin\beta\,\xi+\cos\beta\,\eta ) \sin^{m-k-1}\beta \, \cos^{n-m}\beta \, d\beta \, d\eta,
}
and
\eq{
c_{m,k} := \frac{\binom{m-1}{k}}{\binom{n}{k}}.
}
\end{thm}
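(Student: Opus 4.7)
The plan is to compute the $(n+1)$-dimensional Lebesgue measure of the local outer parallel set of $K$ in two different ways and to match the coefficients of the resulting polynomial in the parallel thickness $\rho$. The first calculation uses the classical local Steiner formula in $\mathbb{R}^{n+1}$: for every Borel set $\omega\subset\mathbb{S}^n$ and every $\rho>0$,
\[
  \mathcal{H}^{n+1}\bigl(P_\rho(K,\omega)\bigr)
  =\sum_{j=0}^{n}\frac{\rho^{n+1-j}}{n+1-j}\binom{n}{j}S_j(K,\omega),
\]
where $P_\rho(K,\omega)$ consists of points $y=x+tu$ with $x\in K$, $u\in\omega\cap N(K,x)$, $t\in[0,\rho]$. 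This is the analogue of \autoref{lem:local-Steiner} for the ordinary (non-capillary) parallel body and is derived by the same Jacobian argument with the capillary weight replaced by $1$. The coefficients of this polynomial in $\rho$ uniquely determine the measures $S_j(K,\omega)$.

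For the second calculation I would first reduce to the case that $K$ is smooth and strictly convex inside $L$, with inverse Gauss map $F^L:\mathbb{S}^{m-1}\cap L\to\partial_L K$ and principal radii of curvature $\lambda_1^L,\dots,\lambda_{m-1}^L$. Since $K\subset L$, the normal cone of $K$ at $x=F^L(\xi)$ in $\mathbb{R}^{n+1}$ is $\{s\xi:s\ge 0\}\oplus L^\perp$, so every unit outer normal of $K$ at $x$ in $\mathbb{R}^{n+1}$ is of the form $u=\sin\beta\,\xi+\cos\beta\,\eta$ with $\beta\in[0,\pi/2]$ and $\eta\in\mathbb{S}^{n-m}\cap L^\perp$. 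This is precisely the join decomposition of $\mathbb{S}^n$ induced by $\mathbb{R}^{n+1}=L\oplus L^\perp$, with surface element $d\sigma^n(u)=\sin^{m-1}\beta\cos^{n-m}\beta\,d\beta\,d\sigma^{m-1}(\xi)\,d\sigma^{n-m}(\eta)$. Consequently, the map
\[
  \Phi(\xi,\beta,\eta,t):=F^L(\xi)+t\bigl(\sin\beta\,\xi+\cos\beta\,\eta\bigr)
\]
parametrizes $P_\rho(K,\mathbb{S}^n)$ generically one-to-one, with the degenerate strata $\beta\in\{0,\pi/2\}$ forming a parameter null set. In a principal orthonormal frame the four blocks of $D\Phi$ (tangential $\xi$, tangential $\eta$, $\beta$, and $t$) are mutually orthogonal and a short computation yields
\[
  |\det D\Phi|=t\prod_{i=1}^{m-1}\bigl(\lambda_i^L+t\sin\beta\bigr)(t\cos\beta)^{n-m}.
\]

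Expanding $\prod_i(\lambda_i^L+t\sin\beta)=\sum_{k=0}^{m-1}(t\sin\beta)^{m-k-1}\sigma_k(\lambda^L)$, integrating $\mathbf{1}_\omega(u)\,|\det D\Phi|$ over $t\in[0,\rho]$ and collecting powers of $\rho$ gives an expansion whose coefficient of $\rho^{n+1-k}/(n+1-k)$ equals
\[
  \int_{\mathbb{S}^{m-1}\cap L}\sigma_k(\lambda^L)(\xi)\int_0^{\pi/2}\!\!\int_{\mathbb{S}^{n-m}\cap L^\perp}\mathbf{1}_\omega\bigl(\sin\beta\,\xi+\cos\beta\,\eta\bigr)\sin^{m-k-1}\beta\cos^{n-m}\beta\,d\eta\,d\beta\,d\xi.
\]
Matching with the Steiner expansion and using $dS_k^L(K,\xi)=\binom{m-1}{k}^{-1}\sigma_k(\lambda^L)\,d\sigma^{m-1}(\xi)$ delivers the claimed identity for $\psi=\mathbf{1}_\omega$; linearity and monotone convergence then extend it to every nonnegative measurable $\psi$. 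General $K\subset L$ (convex with nonempty relative interior but possibly non-smooth) are handled by Hausdorff approximation within $L$ by smooth, strictly convex bodies $K_j$, combined with weak continuity of $S_k(K_j,\cdot)$ in $\mathbb{R}^{n+1}$ and of $S_k^L(K_j,\cdot)$ in $L$.

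The main technical obstacle is the smooth parametrization step: one must verify that $\Phi$ is generically injective, that the degenerate strata (at $\beta\in\{0,\pi/2\}$ and at singular relative boundary points of $K$) have parameter measure zero, and that the Jacobian formula genuinely computes the volume of the local parallel set up to a null set. Once this is settled, matching polynomial coefficients and the subsequent approximation argument are routine.
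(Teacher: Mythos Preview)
Your proof is correct and takes a genuinely different route from the paper. The paper does not prove the identity from scratch: it invokes \cite[Thm.~6.2]{GKW11}, which states $\binom{m-1}{k}\,\pi_{L,-k}^*S_k^L(K,\cdot)=\binom{n}{k}\,S_k(K,\cdot)$, and then merely unpacks the definition of the lifting operator $\pi_{L,-k}^*$ by the polar-type change of variables $w=\cos\beta\,\eta+\sin\beta\,\xi$ on the half-sphere $H^{n+1-m}(L,\xi)$, with surface element $d\mathcal{H}^{n+1-m}(w)=\cos^{n-m}\beta\,d\beta\,d\mathcal{H}^{n-m}(\eta)$. Your argument, by contrast, is self-contained: you compute $\mathcal{H}^{n+1}(P_\rho(K,\omega))$ both via the local Steiner formula and via an explicit parametrization exploiting $K\subset L$, and then match the $\rho^{n+1-k}$ coefficients. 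This is in effect a direct proof of the cited result; it avoids reliance on \cite{GKW11} at the cost of the parametrization and approximation steps you flag.

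One point deserves to be said explicitly rather than absorbed into ``up to a null set''. Your map $\Phi$ only reaches points whose metric projection onto $K$ lands on the \emph{relative} boundary $\partial_L K$. Since $K$ has empty interior in $\mathbb{R}^{n+1}$, every $x\in\operatorname{relint}K$ is also a boundary point of $K$ in $\mathbb{R}^{n+1}$, with normal cone $L^\perp$; these contribute the additional piece $\{x+t\eta:x\in\operatorname{relint}K,\ \eta\in\omega\cap\mathbb{S}^n\cap L^\perp,\ 0<t\le\rho\}$ to $P_\rho(K,\omega)$, which $\Phi$ does not cover. Its volume is of order $\rho^{n+1-m}$ and therefore only affects the $j=m$ Steiner coefficient, so your coefficient matching for $k\le m-1$ is unaffected; but the statement that $\Phi$ parametrizes $P_\rho(K,\mathbb{S}^n)$ generically one-to-one is not quite right without this qualification.
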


\begin{proof}
The integral formulation follows directly from \cite[Thm. 6.2]{GKW11}, which states:
\eq{
\binom{m-1}{k} \pi_{L,-k}^* S_k^L(K, \cdot) = \binom{n}{k} S_k(K, \cdot).
}
By the definition of the lifting operator $\pi_{L,-k}^*$ (cf. \cite[Def. 5.2]{GKW11}):
\eq{
\pi_{L,-k}^* S_k^L(K, A) = \int_{\mathbb{S}^{m-1} \cap L} \int_{H^{n+1-m}(L,\xi) \cap A} \langle \xi, w \rangle^{m-k-1} dw \, S_k^L(K, d\xi).
}
In our coordinates, $w = \cos\beta\,\eta + \sin\beta\,\xi$, so $\langle \xi, w \rangle = \sin\beta$. Moreover, on the relatively open $(n+1-m)$-dimensional half-sphere
\eq{
H^{n+1-m}(L,\xi)
:=\bigl\{ w\in \mathbb{S}^{n}\setminus L^\perp: \operatorname{pr}_L(w)=\xi\bigr\}.
}
we have $dw := d\cH^{n+1-m}(w)=\cos^{n-m}\beta \, d\beta \, d\cH^{n-m}(\eta)$. Here, $\operatorname{pr}_L(w)$ is the spherical projection of $w$ on $\bbS^n\cap L$.
\end{proof}

\begin{lemma}\label{lem:belt-lower-estimate}
Let $L\subset\mathbb{R}^{n+1}$ be a linear subspace with $\dim L=m\in\{1,\dots,n\}$, and let
$K\subset L$ be a convex body (with nonempty interior) in $L$.
Suppose $k\in\{1,\dots,m-1\}$. Let $\mathcal{U}\subset\mathbb{S}^{m-1}\cap L$ and $\mathcal{V}\subset\mathbb{S}^{n-m}\cap L^\perp$ be (relatively) open spherical caps with
\eq{
S_k^L(K,\mathcal{U})>0\quad\text{and}\quad \cH^{n-m}(\mathcal{V})>0.
}
For angles $0<\beta_1<\beta_2<\pi/2$, define the belt
\eq{
\mathcal{B}=\bigl\{u=\sin\beta\,\xi+\cos\beta\,\eta:\ \eta\in \mathcal{V},\ \xi\in \mathcal{U},\ \beta\in (\beta_1,\beta_2)\bigr\}
\subset \mathbb{S}^n.
}
Then
\eq{
S_k(K,\cB)
= c_{m,k}\cH^{n-m}(\mathcal{V})S_k^L(K,\mathcal{U})
  \int_{\beta_1}^{\beta_2} \sin^{m-k-1}\beta\,\cos^{n-m}\beta\,d\beta.
}
\end{lemma}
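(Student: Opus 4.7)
The approach is to apply \autoref{thm:GKW} with the test function $\psi$ taken to be the characteristic function $\mathbf{1}_{\mathcal{B}}$ of the belt. This immediately yields
\begin{equation*}
S_k(K,\mathcal{B}) = c_{m,k}\int_{\mathbb{S}^{m-1}\cap L} I(\xi)\,dS_k^L(K,\xi),
\end{equation*}
so the entire task reduces to computing $I(\xi)$ explicitly.

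The key geometric observation is that every $u\in\mathbb{S}^n\setminus(L\cup L^\perp)$ admits a \emph{unique} decomposition $u=\sin\beta\,\xi+\cos\beta\,\eta$ with $\xi\in\mathbb{S}^{m-1}\cap L$, $\eta\in\mathbb{S}^{n-m}\cap L^\perp$, and $\beta\in(0,\pi/2)$; explicitly, $\sin\beta=|\pr_L(u)|$, $\cos\beta=|\pr_{L^\perp}(u)|$, and $\xi,\eta$ are the corresponding normalized projections. Since the assumption $0<\beta_1<\beta_2<\pi/2$ keeps $\mathcal{B}$ strictly away from both $L$ and $L^\perp$, this uniqueness forces the factorization
\begin{equation*}
\mathbf{1}_{\mathcal{B}}(\sin\beta\,\xi+\cos\beta\,\eta)
= \mathbf{1}_{\mathcal{U}}(\xi)\,\mathbf{1}_{\mathcal{V}}(\eta)\,\mathbf{1}_{(\beta_1,\beta_2)}(\beta),
\end{equation*}
valid for all $(\xi,\eta,\beta)\in(\mathbb{S}^{m-1}\cap L)\times(\mathbb{S}^{n-m}\cap L^\perp)\times(0,\pi/2)$.

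Substituting this factorization into $I(\xi)$ and using Fubini to separate the $\eta$ and $\beta$ integrations yields
\begin{equation*}
I(\xi) = \mathbf{1}_{\mathcal{U}}(\xi)\,\mathcal{H}^{n-m}(\mathcal{V})\int_{\beta_1}^{\beta_2}\sin^{m-k-1}\beta\cos^{n-m}\beta\,d\beta.
\end{equation*}
Inserting this back into the outer integral, the indicator $\mathbf{1}_{\mathcal{U}}$ collapses the integration against $S_k^L(K,\cdot)$ to $S_k^L(K,\mathcal{U})$, producing the claimed identity. There is no real obstacle in this argument; the lemma is essentially a direct unpacking of \autoref{thm:GKW}. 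The only point requiring attention is the unique decomposition of $u$, which is ensured precisely by the hypothesis that $\mathcal{B}$ lies at positive distance from both $L$ and $L^\perp$.
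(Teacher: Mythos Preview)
Your proof is correct and takes exactly the same approach as the paper: apply \autoref{thm:GKW} with $\psi=\mathbf{1}_{\mathcal{B}}$. The paper states only this one line, whereas you have spelled out the computation of $I(\xi)$ via the unique polar decomposition on $\mathbb{S}^n\setminus(L\cup L^\perp)$, which is precisely the implicit content behind that line.
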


\begin{proof}
The claim follows from \autoref{thm:GKW} with the choice $\psi=\mathbf{1}_{\mathcal{B}}$.
\end{proof}

\begin{lemma}\label{lem:belt-control}
Let $K_i\subset\bbR^{n+1}$ be a sequence of origin-symmetric convex bodies with
$K_i\to K_\infty$ in the Hausdorff metric and assume that
$K_\infty\subset e_{n+1}^\perp$ is not a single point. Let
\eq{
L:=\op{lin}(K_\infty)\subset e_{n+1}^\perp,\quad m:=\dim L\in\{1,\dots,n\},\quad
\mathcal U:=\bbS^{m-1}\cap L.
}
Then there exist constants $c_\star>0$ and $i_0\in\bbN$, angles
$0<\beta_1<\beta_2<\theta$, and an open spherical cap
$\mathcal V\subset\bbS^{n-m}\cap L^\perp$ centered at $e_{n+1}$, such that
for the belt
\eq{
\mathcal B
:=\bigl\{u=\sin\beta\,\xi+\cos\beta\,\eta: \xi\in\mathcal U, \eta\in\mathcal V, \beta\in(\beta_1,\beta_2)\bigr\}\subset\bbS^n,
}
the following hold:
\begin{enumerate}[(i)]
\item $\mathcal B\Subset \op{int}(\bbS^n_\theta)$ and
      $\overline{\mathcal B}\cap(\bbS^n\cap L^\perp)=\emptyset$;
\item for all $i\ge i_0$ and all $u\in\overline{\mathcal B}$,
\eq{\label{eq:belt-LB}
h_{K_i}(u)\ge c_\star\,\sin\beta_1.
}
\end{enumerate}
\end{lemma}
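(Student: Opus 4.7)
The plan is to reduce the problem to a direct computation on the limit body $K_\infty$ and then transfer the resulting bound to the sequence via uniform convergence of support functions. First, since each $K_i$ is origin-symmetric and Hausdorff convergence preserves origin-symmetry and convexity, $K_\infty$ is an origin-symmetric convex body. The assumption that $K_\infty$ is not a single point, combined with $L=\mathrm{lin}(K_\infty)$, means that $K_\infty$ has nonempty relative interior in $L$; origin-symmetry then forces $0$ to lie in that relative interior, so there exists $r_0>0$ with $B_{r_0}(0)\cap L\subset K_\infty$. In particular,
\[ h_{K_\infty}(\xi)\ge r_0 \quad\text{for all }\xi\in\mathcal{U}=\mathbb{S}^{m-1}\cap L. \]

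Next, I would compute $h_{K_\infty}$ on a belt of the given form. For $u=\sin\beta\,\xi+\cos\beta\,\eta$ with $\xi\in\mathcal{U}$ and $\eta\in\mathbb{S}^{n-m}\cap L^\perp$, the orthogonality $L\perp L^\perp$ gives $\langle x,\eta\rangle=0$ for every $x\in K_\infty\subset L$, whence
\[ h_{K_\infty}(u)=\sup_{x\in K_\infty}\langle x,u\rangle=\sin\beta\,h_{K_\infty}(\xi)\ge r_0\sin\beta. \]
To meet condition (i), I would use that $L\subset e_{n+1}^\perp$ implies $e_{n+1}\in L^\perp$, so $e_{n+1}\in\mathbb{S}^{n-m}\cap L^\perp$, and moreover $\langle\xi,e_{n+1}\rangle=0$, which yields $\langle u,e_{n+1}\rangle=\cos\beta\,\langle\eta,e_{n+1}\rangle$. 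Choosing $\mathcal{V}$ as a sufficiently small open cap about $e_{n+1}$ in $\mathbb{S}^{n-m}\cap L^\perp$ and any $0<\beta_1<\beta_2<\theta$ then makes $\langle u,e_{n+1}\rangle>\cos\theta$ uniformly on $\overline{\mathcal{B}}$, so $\mathcal{B}\Subset\mathrm{int}(\mathbb{S}^n_\theta)$. The disjointness $\overline{\mathcal{B}}\cap(\mathbb{S}^n\cap L^\perp)=\emptyset$ is automatic from $\beta_1>0$, since points of $\mathbb{S}^n\cap L^\perp$ correspond to $\sin\beta=0$.

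Finally, I would invoke that Hausdorff convergence $K_i\to K_\infty$ is equivalent to uniform convergence of support functions on $\mathbb{S}^n$. Setting $c_\star:=r_0/2$, there exists $i_0\in\mathbb{N}$ such that $h_{K_i}\ge h_{K_\infty}-c_\star\sin\beta_1$ on all of $\mathbb{S}^n$ for $i\ge i_0$; combined with the lower bound from the preceding paragraph, this gives $h_{K_i}(u)\ge c_\star\sin\beta_1$ on $\overline{\mathcal{B}}$, which is (ii).

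\textbf{Main obstacle.} There is no deep conceptual difficulty in this lemma; the real work has already been done by setting up the limit body $K_\infty$ and the subspace $L$. The main technical care is in the simultaneous choice of $\beta_1,\beta_2,\mathcal{V}$ so that the belt sits strictly inside $\mathbb{S}^n_\theta$, stays a positive distance from $\mathbb{S}^n\cap L^\perp$, and has $\sin\beta_1>0$ (so the pointwise bound $r_0\sin\beta$ on $h_{K_\infty}$ is strictly positive on the belt). These constraints are mutually compatible precisely because $e_{n+1}\in L^\perp$ (a consequence of $K_\infty\subset e_{n+1}^\perp$) and $\theta>0$; once this geometric alignment is noted, the remainder is a direct computation together with a standard application of uniform convergence.
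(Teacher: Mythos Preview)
Your proof is correct and follows the same overall outline as the paper's: establish a positive lower bound for $h_{K_\infty}$ on $\mathcal U$, choose the belt parameters so that (i) holds, and then transfer a lower bound on the belt to $h_{K_i}$ via uniform convergence of support functions.

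The difference lies in how the lower bound on the belt is obtained. You exploit the inclusion $K_\infty\subset L$ to compute $h_{K_\infty}(u)=\sin\beta\,h_{K_\infty}(\xi)$ exactly for $u=\sin\beta\,\xi+\cos\beta\,\eta$, and then apply the uniform estimate $h_{K_i}\ge h_{K_\infty}-c_\star\sin\beta_1$ directly on $\overline{\mathcal B}$. The paper instead works with $K_i$ throughout: it picks $x_i\in K_i$ with $\langle x_i,\xi\rangle=h_{K_i}(\xi)$, uses origin-symmetry of $K_i$ to bound $\langle x_i,\eta\rangle\ge -h_{K_i}(\eta)$, and then controls $h_{K_i}(\eta)$ on $\overline{\mathcal V}$ by a separate application of uniform convergence (using $h_{K_\infty}\equiv 0$ on $L^\perp$). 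Your route is a little cleaner, since the exact identity for $h_{K_\infty}$ on the belt removes the need to track the $\eta$-contribution for each $K_i$ and to invoke origin-symmetry of $K_i$ at that step; the paper's argument, on the other hand, makes the role of the approximating bodies more explicit. In both versions $c_\star$ depends only on $K_\infty$ (not on $\beta_1,\beta_2$), which is what the subsequent application in the paper requires.
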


\begin{proof}
Write $h_i:=h_{K_i}$ and $h_\infty:=h_{K_\infty}$. Since $K_\infty$ has nonempty interior in $L$, there exists $c_\star>0$ such that
\eq{
h_\infty(\xi)\ge 4c_\star\quad\forall\,\xi\in\mathcal U.
}
By the uniform convergence of $h_i\to h_{\infty}$, there exists $i_0$ such that for all $i\ge i_0$,
\eq{\label{eq:h-lb-U}
h_i(\xi)\ge 2c_\star\quad\forall\,\xi\in\mathcal U.
}

Since $K_\infty\subset L$, we have $h_\infty(\eta)=0$ for every $\eta\in\bbS^n\cap L^\perp$.
Let $0<\beta_1<\beta_2<\theta$. Choose $\epsilon>0$ so small that $\epsilon<\theta-\beta_2$ and define
 the spherical cap $\mathcal V\subset\bbS^{n-m}\cap L^\perp$ by 
\eq{
\mathcal V:=\{\eta\in\bbS^{n-m}\cap L^\perp:  \angle(\eta,e_{n+1})<\epsilon\}.
}
Then for any $\eta\in\overline{\mathcal V}$ and any $\beta\in[\beta_1,\beta_2]$ we have
\eq{
\langle \sin\beta\,\xi+\cos\beta\,\eta, e_{n+1}\rangle
=\cos\beta\,\langle\eta,e_{n+1}\rangle
\ge \cos\beta\,\cos\epsilon
\ge \cos(\beta+\epsilon)>\cos\theta,
}
so $\overline{\mathcal B}\subset\op{int}(\bbS^n_\theta)$.
Also, since $\beta\ge\beta_1>0$, the set $\overline{\mathcal B}$ is disjoint from $\bbS^n\cap L^\perp$.

Next, since $h_\infty\equiv 0$ on $\bbS^n\cap L^\perp$, uniform convergence of $h_i\to h_{\infty}$ implies
(after increasing $i_0$ if necessary) that for all $i\ge i_0$,
\eq{\label{eq:h-small-V}
\sup_{\eta\in\overline{\mathcal V}} h_i(\eta) \le
c_\star\tan\beta_1.
}

Let $i\ge i_0$ and $u\in\overline{\mathcal B}$. Then $u=\sin\beta\,\xi+\cos\beta\,\eta$ for some
$\xi\in\overline{\mathcal U}$, $\eta\in\overline{\mathcal V}$, $\beta\in[\beta_1,\beta_2]$.
Choose $x_i\in K_i$ with $\langle x_i,\xi\rangle=h_i(\xi)$.
Since $K_i$ is origin-symmetric, we have $\langle x_i,\eta\rangle\ge -h_i(\eta)$, hence
\eq{
h_i(u) \ge \langle x_i,u\rangle
= \sin\beta\,h_i(\xi)+\cos\beta\,\langle x_i,\eta\rangle
\ge \sin\beta\,h_i(\xi)-\cos\beta\,h_i(\eta).
}
Using \eqref{eq:h-lb-U}, \eqref{eq:h-small-V}, and $\sin\beta\ge\sin\beta_1$, $\cos\beta\le\cos\beta_1$, we obtain
\eq{
h_i(u)
\ge \sin\beta_1\,(2c_\star)-\cos\beta_1\left(c_\star\tan\beta_1\right)
= c_\star\sin\beta_1,
}
which proves \eqref{eq:belt-LB}.
\end{proof}

\begin{thm}\label{thm:height-lower-bound}
Suppose $\Sigma$ is an even, strictly convex, $\theta$-capillary hypersurface whose capillary support function $s$ satisfies \eqref{eq:CMK}. 
Then
\eq{
H=\max_{x\in \Si}\langle x,e_{n+1}\rangle\ge H_{\star}>0, \quad H_{\star}\cos\theta\le s\le C_0
}
with $H_{\star}=H_{\star}(k,p,\theta,\phi_0,\phi_1,C_0)$.
\end{thm}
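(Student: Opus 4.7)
The plan is to argue by contradiction: suppose $(\Si_i,s_i)$ solve \eqref{eq:CMK} with $\phi_0\le\phi_i\le\phi_1$ and $q_i\in[1,p]$, but $H_i\to 0$. By \autoref{lem:upper-s-bound} the bodies $\wh{\Si}_i$ are uniformly bounded, so Blaschke's selection theorem yields a subsequential Hausdorff limit $K_\infty$. The vertical shrinkage $0\le\langle x,e_{n+1}\rangle\le H_i\to 0$ forces $K_\infty\sub e_{n+1}^\perp$; evenness combined with $\cR|_{e_{n+1}^\perp}=-\op{id}$ makes $K_\infty$ origin-symmetric in $e_{n+1}^\perp$; and \autoref{thm:LowerMax} rules out $K_\infty=\{0\}$. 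Hence $L:=\op{lin}(K_\infty)$ has dimension $m\in\{1,\dots,n\}$.

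Writing $h_i:=s_i\circ T^{-1}$ for the classical support function, the PDE \eqref{eq:CMK} becomes
\[
  dS_k(\wh{\Si}_i,u)=\binom{n}{k}^{-1}h_i(u)^{q_i-1}\phi_i(T^{-1}u)\,d\sigma(u)\quad\text{on }\op{int}(\bbS^n_\theta).
\]
The bounds $h_i\le C_0$, $q_i\le p$, $\phi_i\le\phi_1$ give a uniform density bound $D$, which passes via the Portmanteau inequality on open sets to $S_k(K_\infty,U)\le D\,\sigma(U)$ for every open $U\sub\op{int}(\bbS^n_\theta)$. When $k\le m-1$, I test this on a small spherical cap $U$ of radius $\epsilon$ around $e_{n+1}\in L^\perp$: applying \autoref{thm:GKW} with $\psi=\mathbf{1}_U$ in the polar parametrization $u=\sin\beta\,\xi+\cos\beta\,\eta$ (with $\xi\in\bbS^{m-1}\cap L$, $\eta\in\bbS^{n-m}\cap L^\perp$), a direct computation gives $S_k(K_\infty,U)\sim\epsilon^{n-k}$ while $\sigma(U)\sim\epsilon^n$. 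Since $k\ge 1$, the ratio diverges as $\epsilon\to 0$, contradicting $\le D$.

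When instead $k\ge m$, I produce a uniform positive lower bound on $S_k(\wh{\Si}_i,\cdot)$ over a belt. Applying \autoref{lem:belt-control} to the origin-symmetrized bodies $\op{conv}(\wh{\Si}_i\cup(-\wh{\Si}_i))$ (whose Hausdorff limit is still $K_\infty$, as $K_\infty=-K_\infty$ inside $e_{n+1}^\perp$) produces a belt $\cB\Subset\op{int}(\bbS^n_\theta)$ at positive distance from $\bbS^n\cap L^\perp$ on which the symmetrized support functions are $\ge c_\star\sin\beta_1$; the $\cR$-invariance together with $\wh{\Si}_i\sub\ov{\Rplus}$ (possibly after shrinking the cap $\cV$) then transfers this bound to $h_i$ itself on $\cB$. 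The PDE therefore gives $\sigma_k\ge(c_\star\sin\beta_1)^{q_i-1}\phi_0>0$ on $\cB$, hence $S_k(\wh{\Si}_i,\ov\cB)\ge c_2>0$ uniformly in $i$, and the Portmanteau inequality on closed sets yields $S_k(K_\infty,\ov\cB)\ge c_2>0$. But $S_k(K_\infty,\cdot)$ either vanishes identically ($k>m$) or is concentrated on $\bbS^n\cap L^\perp$ ($k=m$); in both subcases $S_k(K_\infty,\ov\cB)=0$, the contradiction.

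These two cases together rule out $H_i\to 0$, so $H\ge H_\star>0$. The upper bound on $s$ is \autoref{lem:upper-s-bound}; for the lower bound, strict convexity and evenness force the highest point of $\Sigma$ to be $x_\ast=H\,e_{n+1}\in\wh{\Si}$, so for every $\zeta\in\Cth$,
\[
  s(\zeta)\ge\langle x_\ast,\zeta+\cos\theta\,e_{n+1}\rangle=H(\zeta_{n+1}+\cos\theta)\ge H\cos\theta,
\]
using $\zeta_{n+1}\ge 0$ on $\Cth$. The main obstacle will be the dimensional case distinction: $k\le m-1$ requires pairing an upper density bound with the singular $\sin^{-k}\beta$-type density of $S_k(K_\infty,\cdot)$ predicted by \autoref{thm:GKW}, while $k\ge m$ requires pairing a lower area-measure bound on a belt with the support-dimensional structure of $S_k(K_\infty,\cdot)$; the two regimes call for genuinely opposite Portmanteau inequalities.
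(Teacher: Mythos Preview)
Your proof is correct and largely parallels the paper's, but the treatment of the case $m\ge k+1$ is genuinely different. The paper keeps the belt $\cB$ in both regimes: for $m\ge k+1$ it bounds $\int_{\cB} h_i^{1-q_i}\,dS_k(K_i,\cdot)$ from above via $\phi_1\,\cH^n(\cB)$ and from below via $C_0^{1-p}S_k(K_\infty,\cB)$, then exploits the mismatch between the powers $\sin^{m-1}\beta$ and $\sin^{m-k-1}\beta$ in the GKW integral as $\beta_2\downarrow\beta_1\downarrow 0$. You instead pass the density upper bound $dS_k\le D\,d\sigma$ directly to $K_\infty$ via Portmanteau and test on a shrinking cap around $e_{n+1}$, reading off the orders $\epsilon^{n-k}$ versus $\epsilon^n$ from the same GKW formula; this is more direct and avoids needing any lower bound on $h_i$ on the belt in that regime. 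For $m\le k$ the two arguments coincide. One simplification you could adopt: rather than symmetrizing via $\op{conv}(\wh{\Si}_i\cup(-\wh{\Si}_i))$ and then transferring the belt bound back to $h_i$ by hand, the paper reflects $\wh{\Si}_i$ across $\{x_{n+1}=0\}$; on $\op{int}(\bbS^n_\theta)$ one then has $h_{K_i}=h_{\wh{\Si}_i}$ and $S_k(K_i,\cdot)=S_k(\wh{\Si}_i,\cdot)$, so \autoref{lem:belt-control} applies to $h_i$ with no transfer step.
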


\begin{proof}
Let $K$ denote the union of $\widehat{\Sigma}$ and its reflection across the hyperplane $\{x_{n+1}=0\}$ and set $h:=h_K$.
Assume for contradiction that there exist a sequence $(q_i,\psi_i,\Sigma_i,s_i,K_i,h_i)$ satisfying \eqref{eq:CMK} with $\phi=\psi_i$, $q_i\in [1,p]$ and $\phi_0\le\psi_i\le\phi_1$, while
\eq{
H_i:=s_i((1-\cos\theta)e_{n+1})\to 0,
\quad
q_i\to q_{\ast}\in [1,p].
}
Note that by \autoref{lem:upper-s-bound}, we have
\eq{
\sup_{\Cth} s_i\le C_0 \quad\text{for all }i.
}
In view of \cite[Lem. 4.2]{HIS25} and the Blaschke selection theorem, after passing to a subsequence, $K_i\to K_\infty$ in the Hausdorff metric. Then $K_\infty\subset e_{n+1}^\perp$ is origin-symmetric and it is not a point (by \autoref{thm:LowerMax}).

Let $L:=\operatorname{lin}(K_\infty)$ and $m:=\dim L\in\{1,\dots,n\}$. Applying \autoref{lem:belt-control}, we find $\mathcal{B}\Subset\operatorname{int}(\mathbb{S}^n_\theta)$ and constants $c_{\star}>0$, $i_0\in\bbN$, and $0<\beta_1<\beta_2<\theta$ such that
for all $i\ge i_0$ and all $u\in\overline{\cB}$,
\eq{
h_i(u)\ge c_{\star}\sin\beta_1.
}
Since $\beta_1$ can be chosen so that $c_{\star}\sin\beta_1<1$, and $q_i\in[1,p]$, we obtain on $\overline{\cB}$:
\eq{\label{eq:belt-weights}
C_0^{1-p}\le h_i^{1-q_i}(u)\le (c_{\star}\sin\beta_1)^{1-p}\quad\text{for all }u\in\overline{\cB},\ i\ge i_0.
}

Next, note that $m\ge k$. Otherwise, if $m<k$, by \autoref{rem:Sk-vanish}, then we have
\eq{
S_k(K_\infty,\overline{\cB})=0.
}
Since $S_k(K_i,\cdot)\to S_k(K_\infty,\cdot)$, it follows that
\eq{
S_k(K_i,\overline{\cB})\to 0,
}
and by \eqref{eq:belt-weights},
\eq{
\int_{\cB} h_i^{1-q_i}\,dS_k(K_i,u)
\le (c_{\star}\sin\beta_1)^{1-p} S_k(K_i,\cB)\to 0.
}
On the other hand, we have
\eq{
\binom{n}{k}\int_{\cB} h_i^{1-q_i}\,dS_k(K_i,u)
= \int_{T^{-1}\cB}\psi_i
\ge  \phi_0 \cH^n(\cB)>0,
}
a contradiction. Thus $m\ge k$.

\textit{Case 1: $m\ge k+1$.}
Recall that $K_\infty$ has non-empty interior in $L$, so for $\mathcal{U}= \mathbb{S}^{m-1}\cap L$ we have $S^L_k(K_\infty,\mathcal{U})>0$.
Choose $\beta_1,\beta_2$ as in \autoref{lem:belt-control}. Then by \autoref{lem:belt-lower-estimate},
\eq{
S_k(K_\infty,\cB)
= c_{m,k}\cH^{n-m}(\mathcal{V})S_k^L(K_\infty,\mathcal{U})
  \int_{\beta_1}^{\beta_2}\sin^{m-k-1}\beta\,\cos^{n-m}\beta\,d\beta.
}
Using \eqref{eq:belt-weights}, we obtain for $i\ge i_0$,
\eq{
\int_{\cB} h_i^{1-q_i}\,dS_k(K_i,u)
\ge C_0^{1-p} S_k(K_i,\cB).
}
Taking $\liminf$ and using the weak convergence of $S_k(K_i,\cdot)$,
\eq{
&\liminf_{i\to\infty}\int_{\cB} h_i^{1-q_i}\,dS_k(K_i,u)\\
\ge\;& C_0^{1-p} S_k(K_\infty,\cB)\\
=\;& C_0^{1-p} c_{m,k}\cH^{n-m}(\mathcal{V})S_k^L(K_\infty,\mathcal{U})
    \int_{\beta_1}^{\beta_2}\sin^{m-k-1}\beta\,\cos^{n-m}\beta\,d\beta.
}
On the other hand, we have
\eq{
\binom{n}{k}\int_{\cB} h_i^{1-q_i}\,dS_k(K_i,u)
&=\int_{T^{-1}\cB}\psi_i\\
&\le \phi_1\,\cH^n(\cB)\\
&= \phi_1\,\cH^{m-1}(\mathcal{U})\cH^{n-m}(\mathcal{V})
   \int_{\beta_1}^{\beta_2}\sin^{m-1}\beta\,\cos^{n-m}\beta\,d\beta.
}
Since the right-hand side is independent of $i$, we have
\eq{
&\binom{n}{k}\limsup_{i\to\infty}\int_{\cB} h_i^{1-q_i}\,dS_k(K_i,u)\\
&\le \phi_1\,\cH^{m-1}(\mathcal{U})\cH^{n-m}(\mathcal{V})
   \int_{\beta_1}^{\beta_2}\sin^{m-1}\beta\,\cos^{n-m}\beta\,d\beta.
}
Combining the upper and lower bounds and cancelling the common factor
$\cH^{n-m}(\mathcal{V})$ we obtain
\eq{
\frac{\binom{m-1}{k} C_0^{1-p}}{\phi_1} S_k^L(K_\infty,\mathcal{U})
\le \cH^{m-1}(\mathcal{U})\,
   \frac{\int_{\beta_1}^{\beta_2}\sin^{m-1}\beta\,\cos^{n-m}\beta\,d\beta}
        {\int_{\beta_1}^{\beta_2}\sin^{m-k-1}\beta\,\cos^{n-m}\beta\,d\beta}.
}
Letting $\beta_2\downarrow\beta_1$ we get
\eq{
\frac{\binom{m-1}{k} C_0^{1-p}}{\phi_1} S_k^L(K_\infty,\mathcal{U})
\le \cH^{m-1}(\mathcal{U}) (\sin\beta_1)^k.
}
Letting $\beta_1\downarrow 0$ forces the right-hand side to tend to $0$. This is a contradiction.

\textit{Case 2: $m=k$.}
In this case, by \autoref{cor:subspace-support} (applied after approximating $K_\infty$ by polytopes) we have
\eq{
S_k(K_\infty,\omega)=0
}
for every Borel set $\omega\subset\mathbb{S}^{n}$ with
$\omega\cap(\mathbb{S}^{n}\cap L^{\perp})=\emptyset$.
In particular, since $\ov{\cB}\cap(\mathbb{S}^n\cap L^\perp)=\emptyset$, we have
\eq{
S_k(K_\infty,\ov{\cB})=0.
}
Using \eqref{eq:belt-weights} and weak convergence again, we get
\eq{
S_k(K_i,\cB)\to 0,
}
and hence
\eq{
\int_{\cB} h_i^{1-q_i}\,dS_k(K_i,u)
\le (\sup_{\cB} h_i^{1-q_i})\,S_k(K_i,\cB)
\le (c_{\star}\sin\beta_1)^{1-p} S_k(K_i,\cB)\to 0.
}
On the other hand,
\eq{
\binom{n}{k}\int_{\cB} h_i^{1-q_i}\,dS_k(K_i,u)
= \int_{T^{-1}\cB}\psi_i
\ge \phi_0\,\cH^n(\cB)>0,
}
a contradiction.

Thus in all cases our assumption $H_i\to 0$ leads to a contradiction. Therefore there exists $H_{\star}>0$, depending only on $(n,k,p,\theta,\phi_0,\phi_1,C_0)$, such that
\eq{
H\ge H_{\star}
}
for every even, strictly convex, $\theta$-capillary solution of \eqref{eq:CMK} with $q\in[1,p]$ and $\phi_0\le\phi\le\phi_1$.

Finally, since $\Si$ is even, we have $H_{\star} e_{n+1}\in\widehat{\Sigma}$, and hence
\eq{
s\ge H_{\star}\cos\theta.
}
\end{proof}

\begin{rem}\label{rem:Sk-vanish}
Let $K\subset \bbR^{n+1}$ be a non-empty convex set and $L=\op{lin}(K)$. Assume that $k>m=\dim L$. We show that $S_k(K,\cdot)\equiv0$. For a Borel set $\omega\subset \bbS^{n}$ and $\rho>0$ define
\eq{
B_\rho(K,\omega)=\{x\in\bbR^{n+1}: 0<d(K,x)\le \rho,\, u(K,x)\in\omega\},
}
where $d(K,x)$ is the Euclidean distance from $x$ to $K$, $p(K,x)$ is a nearest point of $K$ to $x$, and
\eq{
u(K,x):=\frac{x-p(K,x)}{\abs{x-p(K,x)}}.
}

By the local Steiner formula (cf. \cite[(4.13)]{Sch14}),
\eq{\label{eq:local-Steiner}
\cH^{n+1}\bigl(B_\rho(K,\omega)\bigr)
=\frac{1}{n+1}\sum_{j=0}^{n}\binom{n+1}{j}\rho^{n+1-j}S_j(K,\omega).
}

Since $K\subset L$, we have
\eq{
\{x\in\bbR^{n+1}: d(K,x)\le \rho\}\subset (K+\rho B_L)+\rho B_{L^\perp},
}
where $B_{L}=\bbB\cap L$ and $B_{L^\perp}=\bbB\cap L^{\perp}$ are the unit balls in $L$ and $L^\perp$, respectively. In particular, for $\rho\leq 1$:
\eq{\label{eq:upper}
\cH^{n+1}\bigl(B_\rho(K,\omega)\bigr)
\le \cH^{m}(K+\rho B_{L})\,\cH^{n+1-m}(\rho B_{L^\perp})
\le C\rho^{n+1-m},
}
where $C:=\cH^{m}(K+B_{L})\,\cH^{n+1-m}(B_{L^\perp})$.

On the other hand, if $S_k(K,\omega)>0$ for some Borel set $\omega$, then \eqref{eq:local-Steiner} yields
\eq{\label{eq:lower}
\cH^{n+1}\bigl(B_\rho(K,\omega)\bigr)\ge
 c\rho^{n+1-k},\quad c=c(n,K,\omega).
}
Combining \eqref{eq:upper} and \eqref{eq:lower} gives
\eq{
c\rho^{n+1-k}\le C\,\rho^{n+1-m}\qquad \text{for all }0<\rho\le 1.
}
Since $k>m$, we get a contradiction by letting $\rho\to0$.
\end{rem}

\begin{lemma}[\cite{Sch14}, p. 216]\label{lem:polytope-any-dim}
Let $d\ge 2$ and let $P\subset\mathbb{R}^d$ be a (not necessarily full-dimensional) convex
polytope. For $k\in\{0,1,\dots,d-1\}$ and every Borel set $\omega\subset \Sd$,
\eq{\label{eq:Sch-424}
S_k(P,\omega)=\sum_{F\in\cF_k(P)} \frac{\Hd^{d-1-k}\big(N(P,F)\cap \omega\big)}{\omega_{d-k}}
\Hd^{k}(F).
}
Here $\cF_k(P)$ is the set of $k$-faces of $P$, $N(P,F)$ is the normal cone of $P$ at $F$ (i.e. the set of all outer normal vectors of $K$ at any $x\in\operatorname{relint}F$ together with the zero vector),
$\omega_m=\Hd^{m}(\mathbb{S}^{m})$, and $S_k(P,\cdot)$ is the $k$-th area measure of $P$ on $\Sd$.
In particular,
\eq{
\supp S_k(P,\cdot) \subset \bigcup_{F\in\cF_k(P)} \big(N(P,F)\cap \Sd\big)
= \bigcup_{F\in\cF_k(P)} \nu_P(\op{relint}(F)),
}
where $\nu_P$ denotes the spherical image of $P$.
\end{lemma}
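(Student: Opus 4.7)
The plan is to derive the identity from the local Steiner formula \eqref{eq:local-Steiner} applied to $P\subset\R^d$ by computing the tube volume $\Hd^{d}(B_\rho(P,\omega))$ directly via a geometric decomposition indexed by the faces of $P$. From \eqref{eq:local-Steiner} (specialized to dimension $d$),
\eq{
\Hd^{d}(B_\rho(P,\omega))=\frac{1}{d}\sum_{j=0}^{d-1}\binom{d}{j}\rho^{d-j}S_j(P,\omega),
}
so the measure $S_k(P,\omega)$ is uniquely determined by the coefficient of $\rho^{d-k}$ on the left-hand side, and it suffices to expand this coefficient by a direct computation.

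The key step is a disjoint decomposition based on the metric projection $p_P:\R^d\to P$, which is single-valued and continuous, and for which $p_P(x)$ lies in the relative interior of a uniquely determined face of $P$ (allowing the improper face $F=P$ when $\dim P<d$). On the piece $p_P^{-1}(\op{relint}(F))\setminus P$ corresponding to a $k$-face $F$, the map $(y,\xi)\mapsto y+\xi$, with $y\in\op{relint}(F)$ and $\xi\in N(P,F)\setminus\{0\}$, is a bijection. Since $N(P,F)$ is contained in the orthogonal complement of $\op{aff}(F)-y$ (so that $y$ is genuinely the nearest point of $P$ to $y+\xi$), this bijection realises the piece as an isometric product, and $\Hd^{d}$ on the target factors as $\Hd^{k}\otimes\Hd^{d-k}$ on the source.

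Now introduce polar coordinates $\xi=tu$ in the $(d-k)$-dimensional cone $N(P,F)$, with $t\in(0,\rho]$ and $u\in N(P,F)\cap\Sd$. Fubini's theorem, together with the restriction of $u$ to $\omega$, yields
\eq{
\Hd^{d}\!\bigl(B_\rho(P,\omega)\cap p_P^{-1}(\op{relint}(F))\bigr)
=\frac{\rho^{d-k}}{d-k}\,\Hd^{k}(F)\,\Hd^{d-k-1}(N(P,F)\cap\omega).
}
Summing over all faces of $P$ (for $k=0,1,\dots,d-1$, and noting that the lower-dimensional "seam" on which the face containing $p_P(x)$ is ambiguous has $\Hd^{d}$-measure zero in the tube) produces a polynomial expansion of $\Hd^{d}(B_\rho(P,\omega))$ in $\rho$. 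Matching the coefficient of $\rho^{d-k}$ with that in the local Steiner expansion gives the stated identity after repackaging the combinatorial factor $\binom{d}{k}/(d(d-k))$ into the normalization $1/\omega_{d-k}$.

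The main obstacle is the bookkeeping in the non-full-dimensional case $\dim P<d$: one has to include the improper face $F=P$ (whose normal cone is the full orthogonal complement $\op{aff}(P)^{\perp}$), verify that $\dim N(P,F)=d-k$ for every $k$-face $F$ regardless of $\dim P$, and ensure the overlaps between the projection-based pieces have $\Hd^{d}$-measure zero so that summation of volumes is valid. The inclusion claim for the support of $S_k(P,\cdot)$ then follows immediately from the formula, since any measure-theoretic contribution at $u\notin \nu_P(\op{relint}(F))$ for every $k$-face $F$ must vanish.
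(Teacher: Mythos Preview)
The paper does not supply its own proof of this lemma; it is stated with a bare citation to Schneider's book. Your approach---decompose $B_\rho(P,\omega)$ according to the face of $P$ containing the metric projection, compute each piece as a product $\op{relint}(F)\times\{tu:0<t\le\rho,\ u\in N(P,F)\cap\omega\}$, and then read off the coefficient of $\rho^{d-k}$ via the local Steiner formula---is exactly the standard argument and is, in essence, how Schneider derives the formula. The handling of the non-full-dimensional case (including $P$ itself as an $m$-face with $N(P,P)=\op{aff}(P)^\perp$) is also correct.

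One point needs more care. Your final sentence asserts that the combinatorial factor coming out of the comparison, namely
\[
\frac{d}{(d-k)\binom{d}{k}}=\frac{1}{\binom{d-1}{k}},
\]
``repackages'' into $1/\omega_{d-k}$. Under the paper's definition $\omega_m=\Hd^{m}(\mathbb{S}^m)$, these are not the same number (one is rational, the other involves powers of $\pi$), so the identification cannot be literal. Either the constant in the displayed formula is being quoted with a different normalization of $S_k$ or of $\omega_m$ than the one used in the local Steiner formula \eqref{eq:local-Steiner}, or there is a harmless mis-transcription in the statement. You should track down which; in Schneider's own conventions $\omega_n$ typically denotes $n\kappa_n=\Hd^{n-1}(\mathbb{S}^{n-1})$, and the formula there is often written for the support measures $\Theta_k$ first. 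For the purposes of the paper this is immaterial: the only downstream use is \autoref{cor:subspace-support} and the support inclusion, both of which are insensitive to the overall constant and follow directly from your computation.
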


\begin{cor}\label{cor:subspace-support}
Suppose $L\subset\mathbb{R}^d$ is a linear subspace with $m=\dim L\in\{1,\dots,d-1\}$, and let
$P\subset L$ be an $m$-dimensional polytope.
Then $S_m(P,\cdot)$ is concentrated on $\Sd\cap L^\perp$:
\eq{
S_m(P,\omega)&=\frac{\Hd^{d-1-m}\big( L^\perp\cap \omega\big)}{\omega_{d-m}}\Hd^{m}(P),\quad
\supp S_m(P,\cdot)& \subset \Sd\cap L^\perp.
}
\end{cor}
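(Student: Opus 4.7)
The plan is to apply \autoref{lem:polytope-any-dim} directly with $k=m$, using that an $m$-dimensional polytope has exactly one $m$-face, namely itself. First, I would note that $\cF_m(P)=\{P\}$: any face of $P$ of dimension $m$ must span the same affine hull as $P$ (which is $L$, up to translation), and the only such face of a polytope is the polytope itself. Thus the sum in \eqref{eq:Sch-424} collapses to a single term corresponding to $F=P$.

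Next, I would identify the normal cone $N(P,P)$. For $y\in\operatorname{relint}P$, the definition requires $\ip{v}{x-y}\le 0$ for all $x\in P$. Since $P$ is $m$-dimensional inside $L$, the relative interior point $y$ admits a full neighborhood in $L$ contained in $P$, so the inequality applied in both directions $\pm w$ with $w\in L$ forces $\ip{v}{w}=0$ for all $w\in L$. Hence $N(P,P)=L^\perp$. Substituting $F=P$ and $N(P,F)=L^\perp$ into \eqref{eq:Sch-424} yields
\eq{
S_m(P,\omega)=\frac{\Hd^{d-1-m}\bigl(L^\perp\cap\omega\bigr)}{\omega_{d-m}}\,\Hd^{m}(P),
}
which is the stated identity.

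For the support assertion, I would observe that if $\omega\subset\bbS^{d-1}$ is a Borel set with $\omega\cap L^\perp=\emptyset$, then $\Hd^{d-1-m}(L^\perp\cap\omega)=0$, so $S_m(P,\omega)=0$; equivalently, $\supp S_m(P,\cdot)\subset\bbS^{d-1}\cap L^\perp$.

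There is no serious obstacle here: the corollary is a direct specialization of \autoref{lem:polytope-any-dim} at the top dimension of $P$. The only point requiring a brief verification is that $\cF_m(P)=\{P\}$ and $N(P,P)=L^\perp$, both of which follow immediately from the fact that $P$ has full dimension $m$ inside $L$.
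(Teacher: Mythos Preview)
Your proposal is correct and follows exactly the paper's approach: the paper's proof consists of the single observation that for $k=m$ the only $m$-face is $P$ itself with $N(P,P)=L^\perp$, and then applies \autoref{lem:polytope-any-dim}. Your write-up simply spells out in more detail why $\cF_m(P)=\{P\}$ and $N(P,P)=L^\perp$, which the paper takes as immediate.
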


\begin{proof}
For $k=m$, the only $m$-face is $P$
and $N(P,P)=L^\perp$.
\end{proof}

\section{Regularity Estimates}\label{sec:regularity}

\begin{lemma}\label{lem:curv-radii-p}
Suppose $\Sigma$ is an even, strictly convex,  $\theta$-capillary hypersurface whose capillary support function $s$ satisfies \eqref{eq:CMK}. 
Then
\eq{
  \sigma_1(\tau^\sharp[s]) \le C
  \quad\text{in }\cC_\theta,
}
for some constant $C$ depending only on $n,k,p,\theta,\phi$.
\end{lemma}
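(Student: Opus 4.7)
The plan is to bound $f := \sigma_1(\tau^\sharp[s]) = \Delta s + ns$ from above by running the maximum principle on $\ov{\Cth}$. The non-collapsing estimates from Section~\ref{sec:non-collapsing} (specifically \autoref{thm:height-lower-bound} and \autoref{lem:upper-s-bound}) pinch $s$ between the positive constants $H_\star\cos\theta$ and $C_0$, and the equation yields $\sigma_k(\tau^\sharp[s]) = s^{q-1}\phi$ which is already controlled on both sides. The only obstruction to bounding $\sigma_1$ is therefore that a single eigenvalue of $\tau^\sharp[s]$ could blow up while the others shrink. I would therefore introduce the auxiliary function
\eq{
W := \log f - A\log s,
}
where $A > 0$ is a large constant, depending only on $n,k,p,\theta$ and $\phi$, to be chosen at the end.

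At an interior maximum $\xi_0 \in \op{int}(\Cth)$ of $W$, write $F = \sigma_k^{1/k}(\tau^\sharp[s])$, $U_{ij} = \tau_{ij}[s]$, and $F^{ij} = \partial F/\partial U_{ij}$, so that the equation becomes $F = (s^{q-1}\phi)^{1/k}$. I would compute $F^{ij}\nabla_i\nabla_j W(\xi_0) \le 0$ using: (i) the Ricci identity on the unit sphere to commute $\nabla^2$ with $\Delta$ (producing lower-order curvature terms proportional to $g$); (ii) concavity of $F$ on the positive cone to discard the third-order term $F^{ij,pq}\nabla_l U_{ij}\nabla_l U_{pq} \le 0$; (iii) the Euler relation $F^{ij}U_{ij} = F$ together with the Newton--Maclaurin inequality, which provides a uniform lower bound $F^{ij}g_{ij} \ge c(n,k) > 0$; and (iv) the critical-point identity $\nabla f/f = A\,\nabla s/s$ to absorb the gradient-square contributions produced by $-A\log s$. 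Together with the bounds on $s$ and $\phi$, these ingredients yield a pointwise estimate
\eq{
f(\xi_0) \le C(n,k,p,\theta,\phi,H_\star,C_0).
}

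At a boundary maximum $\xi_0 \in \partial\Cth$ the Robin condition $\nabla_\mu s = \cot\theta\,s$ gives $\nabla_\mu\log s = \cot\theta$ directly. To evaluate $\nabla_\mu f = \nabla_\mu(\Delta s + ns)$, I would differentiate the Robin condition tangentially and use that $\partial\Cth$ is totally umbilical in $\Cth$ with constant second fundamental form $\cot\theta\,g_{\partial\Cth}$ (since $\Cth$ is a geodesic ball of polar radius $\theta$ in the round sphere). This converts the mixed derivatives $\nabla_\mu\nabla_a\nabla_b s$ (for tangential indices $a,b$) into expressions in tangential data only, and yields an explicit pointwise identity for $\nabla_\mu f$ at $\xi_0$. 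Plugging this into $\nabla_\mu W(\xi_0) = \nabla_\mu\log f - A\cot\theta$ and choosing $A$ sufficiently large, I expect to obtain $\nabla_\mu W(\xi_0) < 0$, contradicting the Hopf sign condition $\nabla_\mu W(\xi_0) \ge 0$ at a boundary maximum. Hence the maximum of $W$ is interior, and the previous paragraph applies.

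The main obstacle is the boundary step: one must distil from the Robin condition and the umbilicity of $\partial\Cth$ a clean pointwise expression for $\nabla_\mu\sigma_1$ in terms of $s$, $\nabla s$ and second tangential derivatives along $\partial\Cth$, so that this term can be compared with the $-A\cot\theta$ contribution and overwhelmed for large $A$. This is the step that adapts the standard interior $\sigma_1$-estimate for the $L_p$-Christoffel--Minkowski equation to the capillary setting; once it is in place, the eventual choice of $A$ depending on the bounds for $s$ and $\phi$ is routine and completes the proof.
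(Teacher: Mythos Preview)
Your interior argument is essentially correct, though more elaborate than necessary: the paper applies the maximum principle directly to $\sigma_1$ without any auxiliary function. The reason this works is that when you expand $-\Delta\bigl(s^{(q-1)/k}\phi^{1/k}\bigr)$, the only term containing $\sigma_1$ carries the coefficient $(1-q)/k\le 0$ (since $q\ge 1$), so it has the good sign and can be dropped. The remaining terms are controlled by the $C^0$ bounds on $s$, the gradient bound $|\nabla s|\le C_1/\sin\theta$, and $\|\phi^{1/k}\|_{C^2}$. Your $-A\log s$ term is harmless here but not needed.

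The boundary step, however, has a genuine gap. You hope to extract from the Robin condition and the umbilicity of $\partial\Cth$ a ``clean pointwise expression for $\nabla_\mu\sigma_1$ in terms of $s$, $\nabla s$ and second tangential derivatives'', and then absorb it by choosing $A$ large. This is not possible: the umbilicity identity $\nabla_\mu\tau_{\alpha\beta}=\cot\theta\,(\tau_{\mu\mu}g_{\alpha\beta}-\tau_{\alpha\beta})$ only covers \emph{tangential} indices $\alpha,\beta\ge 2$. The remaining piece $\nabla_\mu\tau_{\mu\mu}$ is a true third-order interior quantity, not determined by the first-order Robin condition. To control it one must differentiate the equation in the $\mu$-direction, which produces
\eq{
  F^{\mu\mu}\,\nabla_\mu\tau_{\mu\mu}
  = \nabla_\mu\bigl(s^{(q-1)/k}\phi^{1/k}\bigr)
    + \cot\theta\sum_{\alpha\ge 2}F^{\alpha\alpha}(\tau_{\alpha\alpha}-\tau_{\mu\mu}),
}
and hence a factor $1/F^{\mu\mu}$ that the term $-A\cot\theta$ cannot dominate for any fixed $A$ unless $F^{\mu\mu}$ is already bounded below. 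The paper's boundary argument proceeds differently: it works with $\sigma_1$ itself, combines the inequality $0\le F^{\mu\mu}\nabla_\mu\sigma_1$ at the boundary maximum with the identity above to obtain $\sigma_1\le C/F^{\mu\mu}$, and then proves an \emph{a priori} lower bound for $F^{\mu\mu}$ via the algebraic relation $\sigma_k(\lambda)=\lambda_\mu\sigma_{k-1}(\lambda|\lambda_\mu)+\sigma_k(\lambda|\lambda_\mu)$ together with $\sigma_k\ge c>0$ and the Newton--Maclaurin inequality. This lower bound on $F^{\mu\mu}$ is the actual content of the boundary estimate, and it is missing from your proposal.
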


\begin{proof}
Let $F = \sigma_k^{\frac{1}{k}}$. Then
\eq{\label{normalize-eq-q}
F(\tau^\sharp[s])= s^{\frac{q-1}{k}}\phi^{\frac{1}{k}}.
}
Using the identity
\eq{
 \nabla^2_{ii}\si_1=\De \tau_{ii}-n\tau_{ii}+\sigma_1
}
and the concavity of $F$, there holds
\eq{
F^{ij}g_{ij}\si_1 \leq F^{ij}\nabla^2_{ij}\si_1+ns^\frac{q-1}{k}\phi^\frac{1}{k}-\De (s^\frac{q-1}{k}\phi^\frac{1}{k}).
}
We calculate
\eq{
-k\De (s^\frac{q-1}{k}\phi^\frac{1}{k}) =& (1-q)s^{\frac{q-1}{k}-1}\phi^\frac{1}{k}\si_1-n(1-q)s^\frac{q-1}{k}\phi^\frac{1}{k}\\
&+\frac{1}{k}(q-1)(k+1-q)s^{\frac{q-1}{k}-2}|\nabla s|^2 \phi^\frac{1}{k}\\
&+2(1-q)s^{\frac{q-1}{k}-1}\langle \nabla s,\nabla \phi^\frac{1}{k}\rangle-k s^\frac{q-1}{k} \De \phi^\frac{1}{k}.
}
Due to the concavity of $F$, we have $\operatorname{tr}(\dot{F})\geq c_{k}$. By \autoref{thm:height-lower-bound}, we have
\eq{\label{s-bound}
1/C_1\leq s\leq C_1
}
for some constant $C_1>1$ depending only $n,k,p,\theta,\phi$. It follows from \cite[Lem. 4.8]{HIS25} that 
\eq{
|\nabla s|\leq \frac{C_1}{\sin\theta}.
}
Hence, if $\si_1$ attains its maximum in the interior of $\cC_\theta$, we have
\eq{
c_k\si_1 \leq &~ n C_1\|\phi^\frac{1}{k}\|_{C^0}+nC_1\|\phi^\frac{1}{k}\|_{C^0}+C_1^2 \big(\frac{C_1}{\sin\theta}\big)^2 \|\phi^\frac{1}{k}\|_{C^0}\\
&~+2 \frac{C_1^2}{\sin\theta}\|\phi^\frac{1}{k}\|_{C^1}+C_1\|\phi^\frac{1}{k}\|_{C^2},
}
where we also used that $q\in [1,p]$ with $1<p<k+1$. Thus we have 
\eq{
\si_1\leq C
}
for some constant $C=C(n,k,p,\theta,\phi)$.

Now we need to treat the case that the maximum of $\sigma_1$ is attained at a boundary point, say $p_{\ast}$. Let $\{\mu\} \cup \{e_{\alpha}\}_{\alpha\geq 2}$ be an orthonormal basis of eigenvectors of $\tau^{\sharp}[s]$ at $p_{\ast}$ such that $\tau_{ij}=\la_i \de_{ij}$. Moreover, using
\eq{\label{important identity}
  \nabla_{\mu}\tau_{\alpha\beta}
  = \bigl(\tau_{\mu\mu} g_{\alpha\beta} - \tau_{\alpha\beta}\bigr)\cot\theta,
  \quad 2\le \alpha,\beta\le n,
}
and $\nabla_{\mu}s = \cot\theta\,s$, we obtain at $p_\ast$ that
\eq{\label{second proof}
0\le F^{\mu\mu}\nabla_{\mu}\sigma_1&\le \cot\theta
    \Bigl((n+1)\phi^{\frac{1}{k}}s^{\frac{q-1}{k}}
          - F^{\mu\mu}\sigma_1
          - \sum_i F^{ii}\la_{\mu}\Bigr)\\
&\quad
  + s^{\frac{q-1}{k}}
    \Bigl(\nabla_{\mu}\phi^{\frac{1}{k}}
          + \frac{q-1}{k}\cot\theta\,\phi^{\frac{1}{k}}\Bigr)
}
and
\eq{\label{si1 bound 2nd proof}
  \sigma_1
  &\le \frac{s^{\frac{q-1}{k}}\max_{\Cth}\bigl|\nabla_{\mu}\phi^{\frac{1}{k}}\bigr|}
           {\cot\theta\,F^{\mu\mu}}
     + \Bigl(n+1+\frac{q-1}{k}\Bigr)
       \frac{s^{\frac{q-1}{k}}\phi^{\frac{1}{k}}}{F^{\mu\mu}}\\
  &\le \frac{C_1\|\phi^{\frac{1}{k}}\|_{C^1}}
           {\cot\theta\,F^{\mu\mu}}
     + (n+2)\,\frac{C_1\|\phi^{\frac{1}{k}}\|_{C^0}}{F^{\mu\mu}},
}
see \cite[(4.4),(4.5)]{HIS25} for details.

Next we show that $F^{\mu\mu}$ cannot be very small. By \eqref{s-bound}, we get
\eq{\label{dot F mumu estimate}
c_1:=(\min_{\Cth} \phi ) C_1^{1-p}&\leq \phi s^{q-1}=\si_k(\la)\\
&=\la_\mu \si_{k-1}(\la|\la_\mu)+\si_k(\la|\la_\mu)\\
                     &\leq \la_\mu \si_{k-1}(\la|\la_\mu)+c_1\si_{k-1}(\la|\la_\mu)^\frac{k}{k-1}\\
                     &\leq c_2 \la_{\mu}F^{\mu\mu}+c_3 (F^{\mu\mu})^{\frac{k}{k-1}},
}
where we used that
\eq{
F^{\mu\mu}&=\frac{1}{k}\si_k^\frac{1-k}{k}(\la)\si_{k-1}(\la|\la_\mu)\\
&=\frac{1}{k} (s^{q-1}\phi)^\frac{1-k}{k}\si_{k-1}(\la|\la_\mu)\\
&\geq \frac{1}{k} C_1^{\frac{(p-1)(1-k)}{k}}\|\phi^\frac{1}{k}\|_{C^0}^{1-k}\si_{k-1}(\la|\la_\mu).
}
Note that all these constants $c_i$ depend only on $n,p,k,\theta,\phi$. 

Substituting \eqref{dot F mumu estimate} in \eqref{second proof}, we obtain
\eq{
0\leq F^{\mu\mu}\nabla_{\mu}\sigma_1 \leq &~\Bigl((n+1)\phi^\frac{1}{k}s^\frac{q-1}{k}-\sum _{i}F^{ii}\la_{\mu}\Bigr)\cot \theta \\
&~+s^\frac{q-1}{k}\Bigl(\nabla_{\mu}\phi^{\frac{1}{k}}+\frac{q-1}{k}\cot\theta\, \phi^{\frac{1}{k}}\Bigr)\\
\leq &~\frac{\cot\theta}{c_2}\sum_{i}F^{ii}\Bigl(-\frac{c_1}{F^{\mu\mu}}+c_3(F^{\mu\mu})^{\frac{1}{k-1}}\Bigr)\\
&~+C_1\Bigl(\|\phi^{\frac{1}{k}}\|_{C^1}+\big(n+2\big)\cot\theta\, \|\phi^{\frac{1}{k}}\|_{C^0}\Bigr).
}
Hence $F^{\mu\mu}$ cannot be small, and in view of \eqref{si1 bound 2nd proof}, $\sigma_1$ is bounded above and the bound depends only on $n,p,k,\theta,\phi$.
\end{proof}

In view of \autoref{lem:curv-radii-p}, the higher-order regularity follows form \cite{LT86} and Schauder estimate.
\begin{prop}\label{prop:regularity}
Suppose $\Sigma$ is an even, strictly convex, $\theta$-capillary hypersurface whose capillary support function $s$ satisfies \eqref{eq:CMK}. 
Then for any $m\geq 1$ we have $\|s\|_{C^m} \leq C_m$ for some constant depending only on $n,p,k,\theta,\phi$.  
\end{prop}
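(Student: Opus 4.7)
The plan is to combine the $C^2$ bound implied by Lemma \ref{lem:curv-radii-p} and Theorem \ref{thm:height-lower-bound} with the Lieberman--Trudinger regularity theory \cite{LT86} for oblique boundary value problems, then bootstrap via classical Schauder estimates.

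First I would assemble the $C^2$ estimate. Theorem \ref{thm:height-lower-bound} gives $H_\star \cos\theta \le s \le C_0$; Lemma \ref{lem:curv-radii-p} yields $\sigma_1(\tau^\sharp[s]) \le C$; and the equation $\sigma_k(\tau^\sharp[s]) = s^{q-1}\phi$, together with $s \ge H_\star\cos\theta$, $\phi \ge \phi_0 > 0$, and $q \in [1, p]$ with $1 < p < k+1$, forces $\sigma_k(\tau^\sharp[s]) \ge c_0 > 0$. Hence the eigenvalues of $\tau^\sharp[s]$ are confined to a compact subset of $\Gamma_k \cap (0, C]^n$ depending only on $(n, k, p, \theta, \phi)$, on which the concave operator $F = \sigma_k^{1/k}$ is uniformly elliptic: the derivatives $F^{ii} = \frac{1}{k}\sigma_k^{1/k-1}\sigma_{k-1}(\lambda|i)$ are bounded above (directly from $\sigma_k \ge c_0$ and $\sigma_1 \le C$) and bounded below (by a compactness argument using Newton--Maclaurin on $\overline{\Gamma_k}$ to rule out any $\sigma_{k-1}(\lambda|i)$ shrinking to zero while $\sigma_k$ stays uniformly positive).

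Next, rewriting \eqref{eq:CMK} as $F(\tau^\sharp[s]) = s^{(q-1)/k}\phi^{1/k}$ places us in the framework of a concave, uniformly elliptic fully nonlinear equation on $\Cth$, paired with the linear oblique (Robin) boundary condition $\nabla_\mu s = \cot\theta \, s$, whose obliqueness coefficient $\cot\theta > 0$ is bounded away from zero. This is precisely the setting of \cite{LT86}, so that theorem yields a uniform estimate $\|s\|_{C^{2,\alpha}} \le C'$ for some $\alpha \in (0, 1)$ and $C'$ depending only on $(n, k, p, \theta, \phi)$. From here the bootstrap is classical: differentiating the equation in any tangential direction shows that the first derivatives of $s$ satisfy linear uniformly elliptic equations with $C^\alpha$ coefficients, $C^\alpha$ inhomogeneity, and linear Robin boundary conditions, so Schauder estimates upgrade $C^{2,\alpha}$ to $C^{3,\alpha}$, and iterating yields $\|s\|_{C^m} \le C_m$ for every $m \ge 1$. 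The main delicate point in the whole plan is the uniform ellipticity step; once it is secured, the Lieberman--Trudinger estimate and the subsequent Schauder bootstrap proceed without further obstruction.
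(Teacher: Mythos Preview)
Your proposal is correct and follows exactly the route the paper takes: the paper's entire proof is the single sentence ``In view of \autoref{lem:curv-radii-p}, the higher-order regularity follows from \cite{LT86} and Schauder estimate,'' and you have simply unpacked the uniform-ellipticity and bootstrap steps that this sentence leaves implicit.
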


\section{Strict Convexity}\label{sec:strict-convexity}

\begin{thm}\label{thm:full-rank-p}
Let $\theta\in(0,\pi/2)$, $1\le k<n$ and $q\geq 1$.
Suppose $\phi\in C^2(\Cth)$ satisfies
\eq{\label{eq:phi-structure}
\nabla^2\phi^{-\frac{1}{q+k-1}} + g\,\phi^{-\frac{1}{q+k-1}} \ge 0
  \quad\text{in }\Cth,
}
and the boundary condition
\eq{\label{eq:phi-bdry-p}
\nabla_\mu\phi^{-\frac{1}{q+k-1}} \le \cot\theta \,\phi^{-\frac{1}{q+k-1}}
  \quad\text{on }\partial \Cth.
}
Let $0\le s\in C^2(\Cth)$ be a capillary function, i.e.
\eq{\label{eq:s-capillary}
\nabla_\mu s = \cot\theta\,s
  \quad\text{on }\partial \Cth,
}
with
\eq{
\tau^\sharp[s]\ge 0\quad\text{in }\Cth,
}
and suppose that $s$ solves
\eq{\label{eq:sigma-k-p}
  \sigma_k\bigl(\tau^\sharp[s]\bigr) = s^{q-1}\,\phi
  \quad\text{in }\cC_\theta.
}
Denote by $\lambda_1$ the smallest eigenvalue of $\tau^\sharp[s]$. If $s>0$, then
$
  \lambda_1 > 0.
$
\end{thm}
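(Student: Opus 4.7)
I would follow the Caffarelli--Guan--Ma constant-rank strategy, in the capillary half-space form developed in \cite{HIS25} for the case $q=1$, with the structural exponent adjusted from $\tfrac{1}{k-1}$ to $\tfrac{1}{q+k-1}$ in order to absorb the $s^{q-1}$ factor on the right-hand side.

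Set $W:=\tau^\sharp[s]$ and let $l:=\min_{\Cth}\operatorname{rank}(W)$. Since \eqref{eq:sigma-k-p} forces $\sigma_k(W)>0$, one has $l\ge k$; the goal is to show $l=n$. Assuming $l<n$, pick $x_0\in\Cth$ with $\operatorname{rank}W(x_0)=l$ and fix a neighborhood $U$ of $x_0$ in which the eigenvalues of $W$ split into a ``good'' block $G=\{1,\dots,l\}$ (bounded below) and a ``bad'' block $B=\{l+1,\dots,n\}$ (close to $0$). I would use a Guan--Ma type test function of the form
\eq{
\Phi:=\sigma_{l+1}(W)+\frac{\sigma_{l+1}(W)^2}{\sigma_l(W)},
}
which is nonnegative on $U$, vanishes at $x_0$, and vanishes in a neighborhood of $x_0$ iff $\operatorname{rank}(W)\equiv l$ there. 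The aim is to apply the interior strong maximum principle together with the boundary Hopf lemma to force $\Phi\equiv 0$ throughout $\Cth$.

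For the interior inequality, I would rewrite \eqref{eq:sigma-k-p} as $F(W)=s^{(q-1)/k}\phi^{1/k}$ with $F=\sigma_k^{1/k}$, differentiate twice, and use the concavity of $F$ on the positive cone together with standard identities for derivatives of elementary symmetric polynomials. The target is a differential inequality
\eq{
F^{ij}\nabla_i\nabla_j\Phi\le C\bigl(\Phi+|\nabla\Phi|\bigr)
}
on $U$, modulo ``good'' terms vanishing linearly with $\Phi$, \emph{provided} $s^{q-1}\phi$ admits a matching concavity. The coefficient of $\log s$ (equal to $q-1$) combined with that of $\log \phi$ (equal to $1$) under the $k$-th root normalization of $F$ shows that the correct hypothesis is precisely \eqref{eq:phi-structure}, with exponent $\tfrac{1}{q+k-1}$ forced by exactly this bookkeeping.

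For the boundary analysis, differentiating \eqref{eq:s-capillary} yields the capillary identity
\eq{
\nabla_\mu\tau_{\alpha\beta}=(\tau_{\mu\mu}g_{\alpha\beta}-\tau_{\alpha\beta})\cot\theta,
}
which propagates the Robin condition to $W$. Analogous identities for the minors entering $\Phi$, combined with \eqref{eq:phi-bdry-p}, should give $\nabla_\mu\Phi\le 0$ at any boundary minimum of $\Phi$, so the Hopf lemma rules out a strict boundary minimum. Combined with the interior strong maximum principle this forces $\Phi\equiv 0$ on $\Cth$, hence $\operatorname{rank}(W)\equiv l$. To finish, I would rule out $l<n$: the resulting kernel of $W$ is a smooth, integrable, parallel-transport invariant distribution on $\Cth$, which is incompatible with $s>0$, \eqref{eq:s-capillary}, and \eqref{eq:sigma-k-p}. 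The main obstacle I anticipate is the interior inequality: organizing the third- and fourth-order terms produced by twice-differentiating the equation and matching them against the convexity of $\phi^{-1/(q+k-1)}$ is computationally delicate, although for $q=1$ it reduces to the computation in \cite{HIS25}; the boundary step and the final rank-$n$ argument, while notationally heavy, amount to bookkeeping once the interior inequality is in place.
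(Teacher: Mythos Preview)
Your plan is essentially the same capillary constant-rank argument the paper carries out, and your identification of the exponent $\tfrac{1}{q+k-1}$ as the one forced by the $s^{q-1}$ factor is exactly right. Two differences are worth noting.

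First, for the interior step the paper does not run the Guan--Ma test function $\Phi=\sigma_{l+1}+\sigma_{l+1}^2/\sigma_l$; instead it invokes the viscosity differential inequality
\[
F^{ij}\nabla^2_{ij}\lambda_1 - c(\lambda_1+|\nabla\lambda_1|)\le 0
\]
for the smallest eigenvalue $\lambda_1$ itself, citing \cite{BIS23} and \cite{CH25}. This bypasses the good/bad block decomposition and the third-order bookkeeping you anticipate as the ``main obstacle''; the trade-off is that one must accept the viscosity framework for the non-smooth function $\lambda_1$. Your smooth test-function route is equally valid and reduces to the computation in \cite{GM03,GX18} for the closed $L_p$ case.

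Second, for the boundary step the paper works not with $\Phi$ but with the individual diagonal entries: at a boundary point $p_\ast$ where the null eigenvalue sits in the $\mu$-direction, one differentiates $F(\tau^\sharp[s])=s^{(q-1)/k}\phi^{1/k}$ along $\mu$, uses the capillary identity you wrote down for the tangential block, and arrives at
\[
\nabla_\mu\tau_{\mu\mu}=\frac{\nabla_\mu f+f\cot\theta}{F^{\mu\mu}},\quad f=s^{(q-1)/k}\phi^{1/k},
\]
so that $\nabla_\mu\tau_{\mu\mu}\ge 0$ follows precisely from \eqref{eq:phi-bdry-p} together with $\nabla_\mu\log s=\cot\theta$. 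This is cleaner than tracking $\nabla_\mu\Phi$. Note also a sign slip in your write-up: with $\mu$ the outward conormal, the Hopf lemma gives $\nabla_\mu\Phi<0$ at a strict boundary minimum, so the inequality you need to derive is $\nabla_\mu\Phi\ge 0$, not $\le 0$.

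Finally, your ``parallel-transport invariant kernel'' step is correct on the cap (the round curvature tensor alone forces any parallel subbundle of $T\Cth$ to be trivial), though an alternative and perhaps more robust route is the capillary integration-by-parts identity $\int_{\Cth}\ell\,\sigma_{l+1}(\tau^\sharp[s])=c\int_{\Cth}s\,\sigma_l(\tau^\sharp[s])$, which gives an immediate contradiction from $s>0$ and $\sigma_l>0$.
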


\begin{proof}
The argument is the same as in \cite[Thm. 3.1]{HIS25} for $q=1$. Define
\eq{
  F = \sigma_k^{1/k},\quad
  f = \bigl(s^{q-1}\phi\bigr)^{1/k}.
}
When $\phi$ satisfies \eqref{eq:phi-structure}, we have in the interior of $\Cth$ that
\eq{
  L[\lambda_1]
  := F^{ij}\nabla^2_{ij}\lambda_1
     - c\bigl(\lambda_1 + \abs{\nabla \lambda_1}\bigr)
  \le 0
}
in the viscosity sense; for details see \cite[Thm. 2.2]{BIS23} or \cite[(3.20)]{CH25}. Therefore, it suffices to carry out the boundary analysis in Step 1 of the proof of \cite[Thm. 3.1]{HIS25} at a point $p_\ast\in\partial\Cth$ where $\lambda_1(p_\ast)=0$ while $\lambda_1>0$ in the interior of $\Cth$: we need a boundary condition on $\phi$ which ensures that
\eq{\label{boundary condition}
  \tau_{ii}(p_\ast)=0 \implies \nabla_\mu \tau_{ii}(p_\ast)\ge 0.
}

Choose an orthonormal frame $\{e_i\}_{i=1}^{n}$ at $p_\ast$ such that
\eq{
  e_1 = \mu,\quad e_\alpha\in T_{p_\ast}\partial \Cth\quad
  \text{ for } \alpha=2,\dots,n,
}
and $\tau^\sharp[s]$ is diagonal in this frame at $p_\ast$ such that $\tau_{ij}= \lambda_i\,\delta_{ij}$.

For $i=\alpha\ge2$, \eqref{boundary condition} follows directly from the boundary identity \eqref{important identity}. For $i=1$, note that \eqref{eq:sigma-k-p} is equivalent to
\eq{\label{eq:F-eq}
  F\bigl(\tau^\sharp[s]\bigr) = f
  \quad\text{in }\Cth.
}

Differentiating \eqref{eq:F-eq} in the $\mu$-direction gives
\eq{
  \sum_i F^{ii}\,\nabla_\mu\tau_{ii} = \nabla_\mu f.
}
Using \eqref{important identity} for $\alpha\ge2$, we obtain
\eq{ \label{eq:bdry-tau-mu}
  F^{\mu\mu}\nabla_\mu\tau_{\mu\mu}
  = \nabla_\mu f
     + \sum_{\alpha\ge2}F^{\alpha\alpha}
       (\tau_{\alpha\alpha}-\tau_{\mu\mu})\cot\theta.
}
At $p_\ast$ we have $\tau_{\mu\mu}(p_\ast)=\lambda_1(p_\ast)=0$. By the $1$-homogeneity of $F$,
\eq{
  \sum_i F^{ii}\tau_{ii} = F(\tau) = f,
}
hence at $p_\ast$,
\eq{
  \sum_{\alpha\ge2}F^{\alpha\alpha}\tau_{\alpha\alpha} = f.
}
Evaluating \eqref{eq:bdry-tau-mu} at $p_\ast$ yields
\eq{
\nabla_\mu\tau_{\mu\mu}
  = \frac{\nabla_\mu f + f\cot\theta}{F^{\mu\mu}}.
}
Thus \eqref{boundary condition} for $i=1$ holds provided that
\eq{\label{eq:f-bdry-cond}
\nabla_\mu\log f \ge -\cot\theta
  \quad\text{on }\partial \Cth.
}

It remains to express \eqref{eq:f-bdry-cond} in terms of $\phi$. Since
\eq{
  f = s^{\frac{q-1}{k}}\phi^\frac{1}{k},\quad \text{and}\quad
\nabla_\mu\log s = \cot\theta
  \quad\text{on }\partial \Cth,
}
we require that
\eq{
\nabla_\mu\log\phi \ge-(k+q-1)\cot\theta
  \quad\text{on }\partial \Cth,
}
which is precisely \eqref{eq:phi-bdry-p}.
\end{proof}

\section{Existence and uniqueness}\label{sec:existence-uniqueness}
For $q\in[1,p]$ set
\eq{
  \phi_q := \phi^{\frac{q+k-1}{p+k-1}}.
}
For $(q,s)$ with $q\in[1,p]$ and $s\in C^{l+2,\alpha}_{\mathrm{even}}(\Cth)$ such that $s>0$ in $\Cth$ we define
\eq{\label{eq:def-F-q}
\left\{
\begin{aligned}
  F(q,s) &= \sigma_k(\tau^\sharp[s]) - s^{q-1}\phi_q &&\text{in }\Cth,\\
  G(q,s) &= \nabla_{\mu}s - \cot\theta\,s
    &&\text{on }\partial\Cth.
\end{aligned}
\right.
}
If $(F(q,s),G(q,s))=(0,0)$, then $s$ solves
\eq{\label{eq:t-equation}
\left\{
\begin{aligned}
  \sigma_k(\tau^\sharp[s])
    &= s^{q-1} \phi_q
    &&\text{in }\Cth,\\
  \nabla_\mu s &= \cot\theta\,s
    &&\text{on }\partial\Cth.
\end{aligned}
\right.
}
For $q=1$, by \cite[Thm. 1.2]{HIS25} there exists a unique even, smooth, strictly convex, $\theta$-capillary solution
$s_1$ of this problem.

Assume now that $\phi_0\leq \phi\leq \phi_1$ with $0<\phi_0<1<\phi_1$. Then, for every
$q\in[1,p]$,
\eq{
  \phi_0^{\frac{q+k-1}{p+k-1}}
  \le
  \phi_q
  \le
  \phi_1^{\frac{q+k-1}{p+k-1}}.
}

Applying \autoref{lem:upper-s-bound}, \autoref{thm:LowerMax} and
\autoref{thm:height-lower-bound} with $\phi$ replaced by $\phi_q$ we obtain constants
$C_0,c_0>0$, independent of $q$, such that every even solution $s$ of
\eqref{eq:t-equation} with $\tau^\sharp[s]> 0$ satisfies
\eq{\label{eq:s-C0-bound}
  c_0 \le s \le C_0 \quad\text{in }\Cth.
}

Moreover, since $\phi_q$ satisfies the structural assumptions of \autoref{thm:full-rank-p}, when $s$ is a solution of \eqref{eq:t-equation} with $\tau^\sharp[s]\ge 0$ and $s>0$, we must have
\eq{\label{eq:CRT-strict}
  \tau^\sharp[s]>0 \quad\text{in }\Cth.
}

Combining \eqref{eq:s-C0-bound} and \eqref{eq:CRT-strict} with \autoref{lem:curv-radii-p} and
\autoref{prop:regularity}, we obtain a uniform $C^{4,\alpha}$ bound: there exists $C>0$ such that
\eq{\label{eq:C4a-bound-sol}
  \|s\|_{C^{4,\alpha}(\Cth)} \le C
}
for all even capillary solutions $s$ of \eqref{eq:t-equation} with $\tau^\sharp[s]>0$, uniformly in $q\in[1,p]$.

Let $R>C$ and define the bounded open set
\eq{
  \cO
  := \bigl\{s\in C^{l+2,\alpha}_{\mathrm{even}}(\Cth):
        \|s\|_{C^{4,\alpha}(\Cth)}<R,\ 2s>c_0,\ \tau^\sharp[s]>0\bigr\}.
}
By \eqref{eq:s-C0-bound}, \eqref{eq:CRT-strict}, and \eqref{eq:C4a-bound-sol}, 
\eq{
  (F(q,s),G(q,s))\neq (0,0)
  \quad\text{for all }(q,s)\in[1,p]\times\partial\cO.
}
Therefore, by \cite[Thm. 1]{LLN17}, for each $q\in[1,p]$ there is a well-defined
integer-valued degree
\eq{
  d(q) := \deg\bigl((F(q,\cdot),G(q,\cdot)),\cO,0\bigr),
}
which is homotopy invariant in $q$, in particular,
$d(1)=d(p)$.

Due to \cite[Lem. 5.4]{HIS25}, the linearized operator
$
  \cL := D_s(F,G)(1,s_1)
$
has trivial kernel (the only kernel directions in the full class correspond to
horizontal translations, which are odd). Together with \autoref{lem:L1-iso} this implies that
$\cL$ is an isomorphism. Hence, from \cite[Thm. 1.1, Cor. 2.1]{LLN17} it follows that $d(1) = \pm 1$
and there exists $s\in\cO$ such that
\eq{
  (F(p,s),G(p,s))=(0,0).
}

Next, we establish uniqueness of solutions to \eqref{capillary-Lp-eq}
in the class of even, strictly convex capillary hypersurfaces.

Assume that $s_1$ and $s_2$ are two even, strictly convex capillary solutions to
\eq{\label{key-eq-two-sols}
\left\{
\begin{aligned}
  \sigma_k(\tau^\sharp[s])
    &= s^{p-1} \phi
    &&\text{in }\Cth,\\
  \nabla_\mu s &= \cot\theta\,s
    &&\text{on }\partial\Cth.
\end{aligned}
\right.
}

Using the mixed-volume interpretation of $\int_{\Cth} s\sigma_k(\tau^\sharp[\cdot])$
we obtain
\eq{\label{key-ineq-1}
\int_{\cC_\theta} s_2 s_1^{p-1}\phi
  &= \int_{\cC_\theta} s_2\sigma_k(\tau^\sharp[s_1])\\
  &=(n+1) \binom{n}{k}V\bigl(s_2,\underbrace{s_1,\ldots,s_1}_{k-\text{times}},\ell,\ldots,\ell\bigr)\\
  &\ge (n+1) \binom{n}{k}
     V\bigl(\underbrace{s_1,\ldots,s_1}_{(k+1)-\text{times}},\ell,\ldots,\ell\bigr)^{\frac{k}{k+1}}
     V\bigl(\underbrace{s_2,\ldots,s_2}_{(k+1)-\text{times}},\ell,\ldots,\ell\bigr)^{\frac{1}{k+1}}\\
  &= \Bigl(\int_{\cC_\theta} s_1^p \phi\Bigr)^{\frac{k}{k+1}}
     \Bigl(\int_{\cC_\theta} s_2^p \phi\Bigr)^{\frac{1}{k+1}},
}
where we used Alexandrov-Fenchel's inequality (see \cite[Thm. 3.1]{MWWX25}):
\eq{\label{AF-ineq}
V(s_1,s_2,s_3,\ldots,s_{n+1})^2
  \ge V(s_1,s_1,s_3,\ldots,s_{n+1})
      V(s_2,s_2,s_3,\ldots,s_{n+1}).
}

On the other hand, by the H\"older inequality we have
\eq{\label{key-ineq-2}
\int_{\cC_\theta} s_2 s_1^{p-1}\phi
  \le \Bigl(\int_{\cC_\theta} s_2^p \phi\Bigr)^{\frac{1}{p}}
       \Bigl(\int_{\cC_\theta} s_1^p \phi\Bigr)^{\frac{p-1}{p}}.
}
Combining \eqref{key-ineq-1} and \eqref{key-ineq-2} yields
\eq{
\Bigl(\int_{\cC_\theta} s_1^p \phi\Bigr)^{\frac{k}{k+1}}
\Bigl(\int_{\cC_\theta} s_2^p \phi\Bigr)^{\frac{1}{k+1}}
  \le
\Bigl(\int_{\cC_\theta} s_2^p \phi\Bigr)^{\frac{1}{p}}
\Bigl(\int_{\cC_\theta} s_1^p \phi\Bigr)^{\frac{p-1}{p}}.
}
Rearranging, we obtain
\eq{
\Bigl(\int_{\cC_\theta} s_1^p \phi\Bigr)^{\frac{p-k-1}{p(k+1)}}
  \ge
\Bigl(\int_{\cC_\theta} s_2^p \phi\Bigr)^{\frac{p-k-1}{p(k+1)}}.
}
Since $1<p<k+1$, we obtain
\eq{
\int_{\cC_\theta} s_1^p \phi \le \int_{\cC_\theta} s_2^p \phi.
}
Interchanging $s_1$ and $s_2$ gives
\eq{
\int_{\cC_\theta} s_2^p \phi \le \int_{\cC_\theta} s_1^p \phi,
}
so in fact
\eq{
\int_{\cC_\theta} s_1^p \phi = \int_{\cC_\theta} s_2^p \phi.
}
Thus equality holds in \eqref{key-ineq-1}, and by the equality case in the Alexandrov-Fenchel inequality \eqref{AF-ineq} we obtain $s_1=s_2$, since the equation is not scale invariant. This proves the uniqueness.

\begin{lemma}\label{lem:L1-iso}
\eq{
  \cL:C^{2,\alpha}_{\mathrm{even}}(\Cth)\to C^{\alpha}_{\mathrm{even}}(\Cth)\times C^{1,\alpha}_{\mathrm{even}}(\partial\Cth)
}
is an isomorphism.
\end{lemma}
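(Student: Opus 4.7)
The plan is to view $\cL$ as a linear elliptic boundary value problem with an oblique (Robin-type) boundary condition and conclude via Fredholm theory together with the kernel computation already provided in the excerpt. First I would compute $\cL$ explicitly at $(q,s)=(1,s_1)$. Since the factor $(q-1)$ kills the $s^{q-1}\phi_q$ linearization at $q=1$, we get
\[
\cL u=\bigl(\sigma_k^{ij}(\tau^\sharp[s_1])(\nabla^2_{ij}u+u\,g_{ij}),\ \nabla_\mu u-\cot\theta\,u\bigr).
\]
Because $s_1$ is strictly convex in the capillary sense, $\tau^\sharp[s_1]>0$, so $\sigma_k^{ij}(\tau^\sharp[s_1])$ is positive definite; the interior operator is therefore uniformly elliptic, and the boundary operator is uniformly oblique. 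This is exactly the class of problems covered by Lieberman--Trudinger Schauder theory \cite{LT86}.

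Next I would record the equivariance under the reflection $\cR$. Since $s_1$ is even (as the unique even solution of the $q=1$ problem), the coefficients of $\cL$ are $\cR$-invariant, and $\cR$ preserves $\Cth$, $\partial\Cth$, and the conormal $\mu$. Consequently $\cL$ commutes with the pullback by $\cR$, so it descends to a bounded linear map on the even Hölder subspaces. Schauder estimates and compactness of the inclusion $C^{2,\alpha}\hookrightarrow C^{2}$ yield that $\cL$ is a Fredholm operator of index zero on the full Hölder pair, and this Fredholm property and index pass to the even subspaces (the $\cR$-invariant decomposition splits kernel and cokernel into their even and odd parts).

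Finally I would quote the kernel computation of \cite[Lem. 5.4]{HIS25}, which asserts that the kernel of $\cL$ on the full class $C^{2,\alpha}(\Cth)$ is spanned by horizontal translations, i.e. by the restrictions to $\Cth$ of linear functions $\ip{a}{\cdot}$ with $a\in\mathrm{span}\{e_1,\dots,e_n\}$. Each such function is odd under $\cR$, hence
\[
\ker\bigl(\cL\big|_{C^{2,\alpha}_{\mathrm{even}}(\Cth)}\bigr)=\{0\}.
\]
By the Fredholm alternative applied on the even subspaces, $\cL$ is an isomorphism. The main subtle point I expect is to verify cleanly that the Fredholm index and the $\cR$-symmetry interact as claimed, i.e. that the even part inherits index zero; all other ingredients (ellipticity, obliqueness, kernel description) are direct from the structure of the equation and the cited results.
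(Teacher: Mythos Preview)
Your proposal is correct and follows essentially the same route as the paper: identify $\cL$ as a uniformly elliptic operator with oblique boundary condition, establish Fredholm index zero (the paper does this via stereographic projection and \cite[Thm.~2.30]{Lie13} rather than \cite{LT86}), use $\cR$-equivariance to restrict to even spaces, and kill the even kernel via \cite[Lem.~5.4]{HIS25}. The subtle point you flag---that index zero is inherited by the even subspace---is precisely the step the paper also does not spell out; it can be handled by homotoping the even coefficients to constant ones, where the even index is visibly zero.
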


\begin{proof}
Since $\tau^\sharp[s_1]>0$, the matrix $[a^{ij}]$ defined by
$a^{ij}=\sigma_k^{ij}(\tau^\sharp[s_1])$ is uniformly
positive definite on $\Cth$, and
\eq{
 Lv=a^{ij}\nabla^2_{ij}v+\mathrm{tr}(a)v,\quad v\in C^{2,\al}(\Cth)
}
is uniformly elliptic with $C^{\infty}$ coefficients. Define the boundary operator
\eq{
Mv=\nabla_{(-\mu)} v+\cot\theta\, v
  \quad\text{on }\partial\Cth.
}

Using the stereographic projection from south pole $\Pi:\Sn\setminus\{-e_{n+1}\}\to\bbR^n$, $\Pi(x',x_{n+1})=\frac{x'}{1+x_{n+1}}$, we can rewrite $\cL$ and $\cM$ on $\Omega:= B_{\tan(\theta/2)}(0)\subset\bbR^n$:
\eq{
  \begin{cases}
    L u
    := a^{ij}(x)D_{ij}u
       + b^i(x)D_i u
       + c(x)u
    & \text{in }\Omega,\\
    M u
    := \beta^i(x)D_i u
       + \cot\theta u
    & \text{on }\partial\Omega,
  \end{cases}
}
with $a^{ij},b^i,c\in C^{\infty}(\ov\Omega), \beta^i\in C^{\infty}(\partial\Omega)$, and
$M$ uniformly oblique:
\eq{
 \langle \beta(x),-\frac{x}{|x|}\rangle=\frac{1}{1+\cos\theta}
  \quad\text{for all }x\in\partial\Omega.
}
Therefore, by \cite[Thm. 2.30]{Lie13}, the map
\eq{
  \cL:C^{2,\alpha}(\Cth)
    &\to C^{\alpha}(\Cth)\times C^{1,\alpha}(\partial\Cth),\\
\cL(v)&:=\bigl(L v,Mv\bigr),
}
is a Fredholm operator of index 0. In particular, we have
\eq{
  \dim\ker\cL=\dim\mathrm{coker}\,\cL.
}

Note that $\cL$ preserves evenness and also under the stereographic projection from the south pole, evenness is preserved: 
if $v\circ\cR=v$ on $\Cth$ and $u(x)=v(\Pi^{-1}(x))$, then $u(-x)=u(x)$ on $B_{\tan(\theta/2)}(0)$.
By \cite[Lem. 5.4]{HIS25}, if $v\in C_{\text{even}}^2(\Cth)$ and satisfies
\eq{
L v=0 \quad\text{in }\Cth,\quad
  Mv=0 \quad\text{on }\partial\Cth,
}
then $v\equiv 0$. In other words,
\eq{
  \ker\cL\cap C^{2,\alpha}_{\mathrm{even}}(\Cth)=\{0\}.
}
Thus, in the even class,
$\dim\mathrm{coker}\,\cL=\dim\ker\cL= 0$
and $\cL$ is an isomorphism. 
\end{proof}

\begin{rem}
We could also use the continuity method to solve the capillary even $L_p$-Christoffel--Minkowski problem. We may interpolate between $1$ and $\phi$ via the path
\[
  H : [0,1] \to C^\infty(\Cth), \quad t \mapsto H(t,\cdot),
\]
defined by
\eq{
H(t,\zeta)
:=
\begin{cases}
\bigl((1-2t)+2t\phi(\zeta)^{-\frac{1}{p+k-1}}\bigr)^{-k},
  & 0\le t\le \tfrac12,\\[0.6em]
\phi(\zeta)^{\frac{q(t)+k-1}{p+k-1}},
  & \tfrac12\le t\le 1,
\end{cases}
}
where
\[
q(t):=1+(p-1)(2t-1),\quad t\in\bigl[\tfrac12,1\bigr].
\]
Then
\eq{
H(0,\zeta)=1,\quad
H\!\left(\tfrac12,\zeta\right)=\phi(\zeta)^{\frac{k}{p+k-1}},\quad
H(1,\zeta)=\phi(\zeta).
}
Now consider the equation
\eq{\label{eq:continuity-method}
\sigma_k(\tau^{\sharp}[s])&=
\begin{cases}
H(t,\cdot),
  & 0\le t\le \tfrac12,\\
s^{q(t)-1}H(t,\cdot),
  & \tfrac12\le t\le 1,
\end{cases}\\
\nabla_{\mu}s&=\cot\theta\, s.
}
For $t=0$, the model capillary support function $\ell$ is the unique even solution of this problem. For every $t\in[0,1]$, the structural assumptions required by the constant rank theorem are satisfied. The closedness in the continuity method follows from the a priori estimates established above, while openness is as in the standard (closed) case.
\end{rem}

\section*{Acknowledgments}
Hu was supported by the National Key Research and Development Program of China (Grant No. 2021YFA1001800). Ivaki was supported by the Austrian Science Fund (FWF) under Project P36545. 

We would like to thank Georg Hofst\"{a}tter for helpful discussions.

\vspace{5mm}
	
	\textsc{School of Mathematical Sciences, Beihang University,\\ Beijing 100191, China,\\}
	\email{\href{mailto:huyingxiang@buaa.edu.cn}{huyingxiang@buaa.edu.cn}}

\vspace{5mm}
        
\textsc{Institut f\"{u}r Diskrete Mathematik und Geometrie,\\ Technische Universit\"{a}t Wien,\\ Wiedner Hauptstra{\ss}e 8-10, 1040 Wien, Austria,\\} \email{\href{mailto:mohammad.ivaki@tuwien.ac.at}{mohammad.ivaki@tuwien.ac.at}}

\end{document}